\numberwithin{figure}{section}
\theoremstyle{plain}
\theoremstyle{plain}
\newtheorem{theorem}{Theorem}[section]
\newtheorem{predl}{Proposition}[section]
\newtheorem{opr}{Definition}[section]
\newtheorem{lm}{Lemma}[section]
\numberwithin{equation}{section}
\title{Isomorphism between super Yangian and quantum loop superalgebra. I}
\author{V. Stukopin}
\providecommand{\propositionname}{Proposition}
\providecommand{\theoremname}{Theorem}
\begin{document}

\maketitle 

\begin{abstract}
Following V. Toledano-Laredo and S. Gautam approach we construct isomorphism between super $\hbar$-Yangian  $Y_{\hbar}(A(m,n))$ of special linear superalgebra and quantum loop superalgebra $U_{\hbar}(LA(m,n))$.\\

{\bf Mathematical Subject Classification (2000)}. Primary 17B37; Secondary 81R50, 13F60.
\\

{\bf Keywords} Super Yangian, Lie Siperalgebra Quantum Loop Algebra, Yangian representation.
\end{abstract}

\markright{Isomorphism between super Yangian and quantum loop superalgebra}

\section{Introduction}

Lie Superalgebras are a working tool for physicists in the study of supersymmetric models of quantum field theory. Classification of simple Lie superalgebras of classical type was obtained by Victor Kac in the late 70-ies of the last century  (see \cite{K}). Lie superalgebras of classical type in this classification are divided into basic and strange Lie superalgebras, and the last unlike basic Lie superalgebras, are not contragredient Lie superalgebras (see \cite{K}, \cite{F-S}). In the middle of the 80s of the last century V. Drinfeld had introduced  quantum groups that are deformations of universal algebras of simple Lie algebras (see \cite{Dr}, \cite{Dr1}, \cite{Dr2}, \cite{Ch-Pr}).  One of the most important examples of such deformations were the Yangians (\cite{Dr}, \cite{Ch-Pr}, \cite{Mol}) associated with rational solutions of the quantum Yang-Baxter equation as well as quantized universal enveloping Kac-Moody algebras and quantum loop algebras. Somewhat later, the Yangians of Lie superalgebras (\cite{St}, \cite{N}) were also studied, which also found application in quantum field theory, in particular, in the quantum theory of superstrings (\cite{D-N-W}, \cite{S-T}).


Despite the marked differences between the Yangians and quantum affine algebras, there are many similarities between them and, first of all, a very similar description of finite-dimensional irreducible representations, which indicates a deep connection between these objects. This relationship was clear already V. Drinfeld, who determined both these classes of Hopf algebras. But explicit results that formulate a connection between these objects have been published relatively recently (see \cite{G-TL}, \cite{G-TL1}, \cite{G-TL2}).

In this paper, we following the approach of S. Gautam and V. Toledano-Laredo (see  \cite{G-TL}), we construct an isomorphism between the Yangian of the special linear Lie superalgebra and the quantum loop superalgebra $U_{\hbar}(LA(m, n))$. It should also mention the works \cite{G-TL1}, \cite{G-TL2}, in which this approach is significantly strengthened and used to investigate the connection between categories of representations of Yangians and quantum loop algebras. Actually, such an investigation is our goal, and this work is the first step in this direction (see also the papers \cite{St11}, \cite{St22}, devoted to the representations of the special linear Lie superalgebra of the Yangian).
To simplify the presentation, we also omitted the proofs of some auxiliary assertions, which we are going to publish in the second part of this paper.

A few words about the organization of this work. In the second paragraph, we recall the definitions of the main actors - the Yangian of a special linear Lie superalgebra and the quantum loop superalgebra, and the result on the classification of finite-dimensional irreducible modules over a Yangian and a quantum affine algebra. In the third section we formulate the main result of the paper. In the last, fourth  paragraph, we give a detailed proof of the theorem on isomorphism.



 We'll use the following standard notations. We denote by $\mathbb{C} $ the complex numbers field, $M_n(K)$ be the ring of $N\times N$ matrices with elements from the ring $K$; we denote by $K[u]$, $K[[u]]$ the ring of polynomials, respectively, formal power series, with coefficients in the ring $K$. We denote the end of the proof by the symbol $\square$.

\vspace{1cm}

\section{Super Yangian $Y_{\hbar}(A(m,n))$  and Quantum Loop Superalgebra $U_{\hbar}(LA(m,n))$}\label{s20}

In this section we recall the definitions of the Yangian of a special linear superalgebra and the quantum loop superalgebra $U_{\hbar}(LA(m,n))$. But first, for the convenience of the reader, we recall the definition of a special linear Lie superalgebra $\mathfrak{sl}(m+1, n+1) = A(m,n)$.

\subsection{Special linear Lie superalgebra}

Lie superalgebra $A(m,n)$ is defined by its Cartan matrix $A=(a_{i,j})_{i,j=1}^{m+n+1}$. Its non zero elements has a following form:
$$a_{i,i}=2, \quad a_{i,i+1}=a_{i+1,i}=-1, \quad i < m+1;$$
$$\quad a_{i-1,i} = a_{i,i-1}=1, \quad a_{i,i}=-2, \quad m+1<i, \quad i \in I=\{1, \ldots , m+n+1\}.$$

This Cartan matrix is symmetrizable and it is often more convenient to use the symmetric Cartan matrix. True, in the corresponding symmetric matrix, the diagonal elements starting from $m + 2$-th are negative and equal to $-2$, and the elements on the row of the upper and lower diagonals, starting with $m + 1$-th, are $1$. The remaining elements are exactly the same as those of the selected Cartan matrix.

Thus, in the relations we use the following symmetrized Cartan matrix of the Lie superalgebra $A(m, n)$:

$$A = (a_{i,j})_{i,j=1}^{m+n+1}= \begin{pmatrix} 2& -1&\ldots &0&0& \ldots& 0&0\\-1& 2& \ldots&0&0& \ldots& 0&0\\ \ldots& \ldots& \ldots&\ldots&\ldots& \ldots& \ldots&\ldots\\
0& 0&\ldots& 0& 1& \ldots &0&0 \\0&0&\ldots& 1 & -2& \ldots &0&0\\ \ldots& \ldots&\ldots& \ldots& \ldots& \ldots & \ldots & \ldots\\
0& 0&\ldots& 0& 0& \ldots& 1& -2  \end{pmatrix}.$$ 

The Lie superalgebra $\mathfrak{g} = A(m,n)$ is generated by the generators: $h_i, x^{\pm}_i$,  $i \in I$.  The  generators $x^{\pm}_{m + 1}$ are odd, while the remaining generators are even, that is, the parity function $p$ takes on the following values: $p(h_i) = 0, i \in I, p(x^{\pm}_j) = 0, j \neq m + 1,  p(x^{\pm}_{m + 1}) = 1$. These generators satisfy the following defining relations:

\begin{eqnarray}
&[h_i, h_j] = 0, \quad \\
&[h_i, x_j^{\pm}] = \pm a_{ij}x_j^{\pm}, \quad\\
&[x_i^+, x_j^-] = \delta_{ij} h_i, \quad\\
&[[x^{\pm}_{m},x^{\pm}_{m+1}], [x^{\pm}_{m+1}, x^{\pm}_{m+2}]] = 0,\\
&ad^{1- \tilde{a}_{ij}}(x^{\pm}_i)x^{\pm}_j = [x_i^{\pm},[x_i^{\pm}, x_j^{\pm}]] = 0. \quad
\end{eqnarray}

We note that the last two relations are called Serre relations.

\vspace{0.5cm}

\subsection{Definition of super Yangian }

Let $I = \{1, 2, \ldots, m+n+1\}$ the set of index numbers of simple roots of Lie superalgebra $A(m,n)$.

\begin{opr} \label{opr2.4.2}
Yangian (more precisely, $\hbar$-Yangian) $Y_{\hbar}(\mathfrak{g})$ of Lie superalgebra $ \mathfrak{g}$ is a Hopf superalgebra over ring $\mathbb{C}[[\hbar]]$ og formal power series, generated as associative superalgebra by generators $h_{i,k}:= h_{\alpha_i, k}, x^{\pm}_{i,k}:=x^{\pm}_{\alpha_i,k}, i \in I, k \in \mathbb{Z}_+$,

which satisfy the following system of defining relations:

\begin{eqnarray}
&[h_{i,k}, h_{j,l}] = 0,\quad \label{Y1}\\
&[h_{i,0}, x^{\pm}_{j,s}] = \pm d_i a_{ij}x^{\pm}_{j,s}, \quad \label{Y2}\\
&\delta_{i,j} h_{i,k+l} = [x_{i,k}^+, x_{j,l}^{-}], \quad \label{Y5} \\
&[h_{i,k+1},x_{j,l}^{\pm}]= [h_{i,k}, x_{j,l+1}^{\pm}] + \frac{d_ia_{ij}\hbar}{2}(h_{i,k}x_{j,l}^{\pm} + x_{j,l}^{\pm} h_{i,k}),\quad  i\ \mbox{or} \ j\neq m+1, \quad \label{Y3} \\
&[h_{m+1,k+1},x_{m+1,l}^{\pm}] = 0, \quad \label{Y2'} \\
&[x_{i,k+1}^{\pm},x_{j,l}^{\pm}] = [x_{i,k}^{\pm}, x_{j, l+1}^{\pm}] + \frac{d_ia_{ij}}{2} \hbar(x_{i,k}^{\pm}x_{j,l}^{\pm} + x_{j,l}^{\pm}x_{i,k}^{\pm}),\quad
i\neq m\ \mbox{or} \ j\neq m+1, \qquad \label{Y4} \\
&[x_{m+1,k+1}^{\pm}, x_{m+1,l}^{\pm}] = 0, \quad \label{Y4'}\\
&\sum_{\pi \in \mathfrak{S}_r} [x^{\pm}_{i, t_{\pi(1)}},[x^{\pm}_{i, t_{\pi(2)}}, \ldots, [x^{\pm}_{i, t_{\pi(r)}}, x^{\pm}_{j, s}]\ldots] = 0, \quad i \neq j, \quad r= 1 - \tilde{a}_{ij}, \quad \label{Y6} \\
&[[x_{m,k}^{\pm}, x_{m+1,0}^{\pm}], [x_{m+1,0}^{\pm}, x_{m+2,t}^{\pm}]] = 0, \quad
\label{Y7}
\end{eqnarray}
for all integer $m$,  $r$, $l$, $t$. Here $\mathfrak{S}_r$ is a permutation group of the  $r$-elements set. The sum in relation (\ref{Y6}) is taken by all permutations  $\sigma$ of set $\{1,\ldots,r\}$.  We note that the relation (\ref{Y6}) takes the following form for $j \in \{i-1, i+1\} $
$$[x_{i,k}^{\pm}, [x_{i,s}^{\pm},x_{j,l}^{\pm}]] + [x_{i,s}^{\pm}, [x_{i,k}^{\pm}, x_{j,l}^{\pm}]] = 0, $$
for arbitrary natural numbers $k$,  $s$, $l$, since in these cases $\tilde{a}_{ij} = -1$. 
In the remaining  cases for $i \neq j$ we have $\tilde{a}_{ij} = 0$ and relation (\ref{Y6}) is transformed to the relation
$$[x_{i,k}^{\pm}, x_{j,l}^{\pm}] = 0, $$
since  for $i \neq j$ we have $\tilde{a}_{ij} = 0$ in the remaining cases.

The parity function takes the following values on the generators:
$p(x_{j,k}^{\pm})=0,$ for $k \in \mathbb{Z}_+, j \in I\setminus \{m+1\} \quad$  $p(h_{i,k}) = 0$, for  $i \in I, k \in \mathbb{Z}_+$,  $p(x_{m+1, k}^{\pm}) = 1, k \in \mathbb{Z}_+.$
\end{opr}

We note that the universal enveloping superalgebra $U(\mathfrak{g})$ is naturally embedded in $Y(\mathfrak{g})$, and the embedding is given by the formulas $h_i\mapsto h_{i,0},  x^{\pm }_i \mapsto x^{\pm}_{i,0}$. We shall identify the universal enveloping superalgebra $U(\mathfrak{g})$ with its image in Yangian.

\vspace{1.5cm}

\subsection{Definition of quantum loop superalgebra}

In this subsection we define quantum loop superalgebra using a definition of a quantized universal enveloping affine superalgebra in terms of the current system of generators and generating relations, the natural analogue of the new system of generators and defining relations by Drinfeld. Actually, the quantum affine superalgebra defined in this way allows us to naturally separate a quantum loop superalgebra from it, simply by putting in the following defining relations the element $C$, which is the deformation of the central element  $c$ equal to 1.

We now explicitly formulate the definition of the quantum loop superalgebra $U_{\hbar}(LA(m,n))$, slightly changing the notation, following G. Lustig. In addition, we assume that $q= e^{\hbar/2}$.  We use also the following standard notation: 
$$[n]_q = \dfrac{q^n - q^{-n}}{q - q^{-1}}, \quad [n]_q! = [n]_q \cdot [n-1]_q\cdot \ldots [1]_q, \quad \left[n \atop k \right]_{q} = \dfrac{[n]_q}{[n-k]_q \cdot [k]_q}.$$

Now, we define quantum universal enveloping superalgebra of Lie superalgebra  $\mathfrak{g} = A(m,n)$.

\begin{opr}
Let $U_{\hbar}(L\mathfrak{g})$ be an associative superalgebra over  $C[[\hbar]]$ topologically generated by generators  $\{E_{i,k}, F_{i,k}, H_{i,k}\}_{i \in I, k \in \mathbb{Z}}$, which satisfy the following system of defining relations:\\
1) For every  $i, j \in I$ and $r,s \in \mathbb{Z}$
\begin{equation}
[H_{i,r}, H_{j,s}] = 0. \label{QL1}
\end{equation}
2) For every $i, j \in I$ and $k \in \mathbb{Z}$
\begin{equation}
[H_{i,0}, E_{j,k}] = a_{i,j}E_{j,k}, \quad [H_{i,0}, F_{j,k}] = -a_{i,j}F_{j,k}. \label{QL2}
\end{equation}
3) For every $i, j \in I$ and $r, k \in \mathbb{Z}\backslash\{0\}$
\begin{equation}
[H_{i,r}, E_{j,k}] = \frac{[ra_{i,j}]_{q_i}}{r}E_{j,r+k}, \quad [H_{i,r}, F_{j,k}] = -\frac{[ra_{i,j}]_{q_i}}{r}F_{j,r+k}. \label{QL3}
\end{equation}
4) For every $i, j \in I$ and $k, l  \in \mathbb{Z}$
\begin{eqnarray}
&E_{i,k+1}E_{j,l} - q_i^{a_{ij}}E_{j,l}E_{i,k+1} = q_i^{a_{ij}}E_{i,k}E_{j,l+1} - E_{j,l+1}E_{i,k},  \nonumber \quad \\
&F_{i,k+1}F_{j,l} - q_i^{-a_{ij}}F_{j,l}F_{i,k+1} = q_i^{-a_{ij}}F_{i,k}F_{j,l+1} - F_{j,l+1}F_{i,k}. \quad \label{QL4}
\end{eqnarray}
5) For every $i, j \in I$ and $k, l  \in \mathbb{Z}$
\begin{equation}
[E_{i,k}, F_{j,l}] = \delta_{i,j} \frac{\psi_{i, k+l} - \varphi_{i, k+l}}{q_i - q_i^{-1}}. \label{QL5}
\end{equation}
6) Let  $i \neq j \in I$ and let  $m = 1 - a_{ij}$. For every $k_1, \ldots, k_m \in \mathbb{Z}$ and $l \in \mathbb{Z}$
\begin{equation}
\sum_{\pi \in \mathfrak{S}_m}\sum_{s=0}^m (-1)^s \left[m \atop s \right]_{q_i} E_{i,k_{\pi(1)}}\cdot \ldots \cdot E_{i,k_{\pi(s)}}\cdot E_{j,l}\cdot E_{i,k_{\pi(s+1)}} \cdot \ldots \cdot E_{i,k_{\pi(m)}} = 0,
\end{equation}

\begin{equation}
\sum_{\pi \in \mathfrak{S}_m}\sum_{s=0}^m (-1)^s \left[m \atop s\right]_{q_i} F_{i,k_{\pi(1)}}\cdot \ldots \cdot F_{i,k_{\pi(s)}}\cdot F_{j,l}\cdot F_{i,k_{\pi(s+1)}} \cdot \ldots \cdot F_{i,k_{\pi(m)}} = 0, \label{QL6}
\end{equation}

7) 
\begin{equation}
[[E_{m,k}, E_{m+1,0}]_q, [E_{m+1,0}, E_{m+2,r}]_q]_q = 0,  \label{QL7}
\end{equation}

\begin{equation}
[[F_{m,k}, F_{m+1,0}]_{q^{-1}}, [F_{m+1,0}, F_{m+2,r}]_{q^{-1}}]_{q^{-1}} = 0,  \label{QL71}
\end{equation}

where elements  $\psi_{i,r}, \varphi_{i,r}$ are defined the following formulas:
$$\psi_i(z) = \sum_{r \geq 0} \psi_{i,r}z^{-r} = \exp(\frac{\hbar d_i}{2}H_{i,0})\exp((q_i - q_i^{-1})\sum_{s \geq 1} H_{i,s} z^{-s}),$$
$$\varphi_i(z) = \sum_{r \geq 0} \varphi_{i,-r}z^{r} = \exp(-\frac{\hbar d_i}{2}H_{i,0})\exp(-(q_i - q_i^{-1})\sum_{s \geq 1} H_{i,-s} z^{s}),$$
and  $\psi_{i,-k} = \varphi_{i,k} = 0$ for $k \geq 1$. In addition, $p(H_{i,r}) = 0$ for $i \in I, r \in \mathbb{Z}_+$, and $p(X^{\pm}_{i,r}) = 0$ for $i \in I\backslash\{m+1\}, r \in \mathbb{Z}$,  $p(x^{\pm}_{m+1, r}) = 0$  for $r \in \mathbb{Z}$. We also use the notation   $[a,b]_q := ab - (-1)^{p(a)p(b)} q ba$ for $q$-(super)commutator.
\end{opr}

We also denote by $U^0\subset U_{\hbar}(L\mathfrak{g})$ a commutative subalgebra generated by generators $\{H_{i,r}\}_{i \in I, r \in \mathbb{Z}}$.

\subsection{Representation theory of super Yangian and quantum loop superalgebra}

For the reader's convenience, we formulate the main result of the paper \cite{St11}, the classification theorem for finite-dimensional irreducible representations of the Yangian $Y(A(m,n))$. (We note that $Y(\mathfrak{sl}(m+1,n+1)) = Y(A(m,n))$ for $m \neq n$. We give its obvious modification for $Y_{\hbar}(A(m, n)) $

\begin{theorem} \label{theorem01}
1) Every irreducible finite-dimensional  $Y_{\hbar}(A(m,n))$-module  $V$ is a module with highest weight $d$ :$V = V(d)$, i. e., 
$$h_i(u)v_0  = \left(1 +  \hbar\sum_{k=0}^{\infty} h_{i,k}\cdot u^{-k-1}\right)v_0 =  \left(1 +  \hbar\sum_{k=0}^{\infty} d_{i,k}\cdot u^{-k-1}\right)v_0 , $$
where $v_0$ is a highest vector, and $i = \{1, 2, \ldots, m+n+1\}$.\\
2) The module $V(d)$ is finite-dimensional if and only if there exist polynomials $P^d_i$,  $i \in \{1,2, \ldots, m, m+2, \ldots m+n+1\} = I \backslash \{m+1\}$, as well as polynomials  $P^d_{m+1}, Q^d_{m+1}$, which satisfy the following conditions:\\
a) all these polynomials with leading coefficients equal to 1 (or monic);\\
b) \begin{eqnarray}
&\dfrac{P^d_{i}(u+ d_ia_{ii}\hbar/2)}{P^d_{i}(u)} = 1 + \hbar\sum_{k=0}^{\infty} d_{i,k}\cdot u^{-k-1}, \quad i \in I \backslash \{m+1\}, \qquad \label{Y3.1} \\
& \dfrac{P^d_{m+1}(u)}{Q^d_{m+1}(u)} = 1 + \hbar\sum_{k=0}^{\infty} d_{m+1,k}\cdot u^{-k-1}. \label{Y3.2} \qquad
\end{eqnarray}
Here $d_i a_{ii}$ is the matrix element of the symmetrized Cartan matrix of the Lie superalgebra $A(m, n)$.
\end{theorem}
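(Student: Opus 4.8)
The plan is to follow Drinfeld's classification scheme, reducing everything to the rank-one subalgebras attached to the nodes of the Dynkin diagram.

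\emph{Part (1).} First I would fix the triangular decomposition $Y_{\hbar}(A(m,n)) = Y^{-}\cdot Y^{0}\cdot Y^{+}$, where $Y^{\pm}$ and $Y^{0}$ are the subalgebras topologically generated by the $x^{\pm}_{i,k}$, respectively the $h_{i,k}$ (this PBW-type statement is one of the auxiliary facts deferred to Part II). Call $\lambda\in\mathfrak{h}^{*}$, $\mathfrak{h}=\bigoplus_{i}\mathbb{C}h_{i,0}$, a weight of $V$ if the associated generalized eigenspace $V_{\lambda}$ is nonzero, and order weights by the positive root lattice. By (\ref{Y2}) the operator $x^{\pm}_{i,k}$ maps $V_{\lambda}$ into $V_{\lambda\pm\alpha_{i}}$ regardless of $k$; hence any nonzero vector lying in $V_{\lambda}$ for a maximal weight $\lambda$ is annihilated by every $x^{+}_{i,k}$. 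Thus the joint kernel $V^{+}=\{v: x^{+}_{i,k}v=0\ \forall i,k\}$ is nonzero, and it is stable under $Y^{0}$ by (\ref{Y3}) and (\ref{Y2'}). Since $Y^{0}$ is commutative and $V$ finite-dimensional, $V^{+}$ contains a common eigenvector $v_{0}$; it is a highest weight vector, its eigenvalues define $d=(d_{i,k})$ via $h_{i}(u)v_{0}=(1+\hbar\sum_{k\geq0}d_{i,k}u^{-k-1})v_{0}$, and by irreducibility $V=Y^{-}v_{0}=V(d)$.

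\emph{Part (2), necessity.} Assuming $V(d)$ finite-dimensional, I would restrict to the rank-one subalgebra attached to each node. For $i\neq m+1$ the subalgebra generated by $x^{\pm}_{i,k},h_{i,k}$ is a homomorphic image of $Y_{\hbar}(\mathfrak{sl}_{2})$, and the submodule it generates from $v_{0}$ is a finite-dimensional highest weight module; the Chari--Pressley classification for $Y_{\hbar}(\mathfrak{sl}_{2})$ then provides a unique monic $P^{d}_{i}$ with $h_{i}(u)v_{0}=\dfrac{P^{d}_{i}(u+d_{i}a_{ii}\hbar/2)}{P^{d}_{i}(u)}v_{0}$, which is (\ref{Y3.1}). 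For the odd isotropic node $i=m+1$, the relations (\ref{Y2'}), (\ref{Y4'}), (\ref{Y5}) together with oddness (so $(x^{\pm}_{m+1,k})^{2}=0$) show that the relevant rank-one subalgebra is a Yangian of $\mathfrak{sl}(1|1)$-type, whose finite-dimensional highest weight modules are classified by \emph{pairs} of monic polynomials $(P^{d}_{m+1},Q^{d}_{m+1})$ through $h_{m+1}(u)v_{0}=\dfrac{P^{d}_{m+1}(u)}{Q^{d}_{m+1}(u)}v_{0}$, i.e.\ (\ref{Y3.2}). I expect this rank-one super statement to be the main obstacle: it is the $\mathfrak{sl}(1|1)$-analogue of the $\mathfrak{sl}_{2}$ result, obtained from the explicit structure of the $\mathfrak{gl}(1|1)$-super Yangian (cf.\ \cite{St11}), and it is what forces two polynomials rather than one at the odd node.

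\emph{Part (2), sufficiency.} Conversely, given monic polynomials satisfying (a) and (b), I would realize a finite-dimensional module of highest weight $d$ as a subquotient of a tensor product of fundamental evaluation modules. Using the evaluation homomorphism of the type-$A$ super Yangian, $\mathrm{ev}_{a}\colon Y_{\hbar}(A(m,n))\to U(A(m,n))$ with spectral parameter $a$, one pulls back finite-dimensional irreducible $A(m,n)$-modules (fundamental representations for the even nodes, the appropriate $1|1$-type modules for the node $m+1$), obtaining for each root $a$ of each $P^{d}_{i}$, respectively $Q^{d}_{m+1}$, counted with multiplicity, a finite-dimensional highest weight module whose $\ell$-highest weight is a single elementary factor. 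The tensor product $W$ of all these modules is finite-dimensional; since the coproduct of $Y_{\hbar}(A(m,n))$ is triangular, $h_{i}(u)$ acts on the product $v_{0}^{\otimes}$ of the highest weight vectors by the product of the individual eigenvalues, which by construction equals $1+\hbar\sum_{k}d_{i,k}u^{-k-1}$. Hence $v_{0}^{\otimes}$ is a highest weight vector of weight $d$, the submodule $Y_{\hbar}(A(m,n))v_{0}^{\otimes}\subseteq W$ has irreducible quotient $V(d)$, and therefore $V(d)$ is finite-dimensional. Besides the rank-one analysis at $m+1$, the ingredients still to be supplied — the evaluation homomorphism, the triangularity of the coproduct, and the compatibility of the node-$(m+1)$ fundamental modules with the extra Serre relation (\ref{Y7}) — all rest on the PBW theorem for the super Yangian, which is among the auxiliary results deferred to Part II.
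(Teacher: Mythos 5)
The paper itself does not prove Theorem \ref{theorem01}: it is imported from \cite{St11} (``we formulate the main result of the paper [St11]\dots we give its obvious modification for $Y_{\hbar}(A(m,n))$''), so there is no in-paper proof to compare your argument against. Judged on its own, your outline is the standard --- and almost certainly the intended --- strategy: highest-weight/triangular analysis for part 1; for necessity in part 2, reduction to the rank-one subalgebras at each node (a single monic Drinfeld polynomial from the Chari--Pressley $\mathfrak{sl}_2$ result at even nodes, a pair $(P^d_{m+1},Q^d_{m+1})$ at the odd node); and sufficiency via tensor products of evaluation modules, with the PBW theorem, the evaluation homomorphism and coproduct triangularity as supporting facts. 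Two cautions. First, at the odd node the rank-one subalgebra is not of highest-weight $\mathfrak{sl}_2$ type: since $a_{m+1,m+1}=0$, the elements $h_{m+1,k}$ are central in it (relations (\ref{Y2'}), (\ref{Y4'})), so the necessity argument there is a rationality argument --- finite-dimensionality forces a linear recurrence among the vectors $x^-_{m+1,k}v_0$, and pairing with the $x^+_{m+1,l}$ via (\ref{Y5}) converts it into a recurrence on the coefficients $d_{m+1,k}$, which is what produces a ratio of two monic polynomials of equal degree; you rightly flag this as the crux, but the mechanism is different from the even-node case and is exactly what cannot be waved through by analogy. Second, the evaluation homomorphism and the triangularity of the coproduct in the Drinfeld presentation are genuinely nontrivial for the super Yangian (they are normally accessed through the $\mathfrak{gl}(m+1|n+1)$ RTT realization); deferring them is acceptable here only because the paper likewise defers everything about this theorem to \cite{St11} and to the announced Part II.
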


We now formulate an analogue of the above result on the classification of finite-dimensional irreducible representations of the Yangian for the quantum affine Lie superalgebra $U_q(A^{(1)}(m,n))$. The proof of this theorem will be published separately.

\begin{theorem} \label{theorem02}
1) Every irreducible finite-dimensional $U_q(A^{(1)}(m,n))$-module $V$ is a module with highest weight  $\delta$ :$V = V(\delta)$, i.e.
$$\psi_i(z)v_0  = \left(\sum_{k=0}^{\infty} \delta^+_{i,k}\cdot z^{-k}\right)v_+, \quad  \varphi_i(z)v_0 = \left(\sum_{k=0}^{\infty} \delta^-_{-i,k}\cdot z^{k}\right)v_+, $$
where  $v_0$ is a highest vector and $i = \{1, 2, \ldots, m+n+1\}$.\\
2) The module $V(\delta)$ is finite-dimensional if and only if there exist polynomials $P^{\delta}_i,$ $i \in \{1,2, \ldots, m, m+2, \ldots m+n+1\} = I \backslash \{m+1\},$ as well as polynomials $P^{\delta}_{m+1}, Q^{\delta}_{m+1}$,   which satisfy the following conditions:\\
a) all these polynomials with leading coefficients equal to 1 and non-zero free terms;\\
b) \begin{eqnarray}
&q^{-d_i a_{ii}/2}\dfrac{P^{\delta}_{i}(q^{d_ia_{ii}}z)}{P^{\delta}_{i}(z)} = \sum_{k=0}^{\infty} \delta^+_{i,k}\cdot z^{-k} = \sum_{k=0}^{\infty} \delta^-_{i,-k}\cdot z^{k}, \quad i \in I \backslash \{m+1\}, \qquad \label{Q3.1}\\
& \dfrac{P^{\delta}_{m+1}(z)}{Q^{\delta}_{m+1}(z)} =  \sum_{k=0}^{\infty} \delta^+_{m+1, k}\cdot z^{-k} = \sum_{k=0}^{\infty} \delta^-_{m+1,-k}\cdot z^{k}. \qquad \label{Q3.2} 
\end{eqnarray}
\end{theorem}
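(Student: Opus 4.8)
The plan is to follow the Chari--Pressley classification of finite-dimensional irreducibles for quantum affine algebras, adapting it to the $\mathbb{Z}_2$-graded, $\hbar$-adic setting. The basic tool is a triangular (PBW-type) decomposition $U_\hbar(L\mathfrak g)\cong U_\hbar^-\otimes U^0\otimes U_\hbar^+$, where $U_\hbar^\pm$ are the subalgebras topologically generated by the $F_{i,k}$ (respectively $E_{i,k}$), $i\in I$, $k\in\mathbb Z$, and $U^0$ is the commutative subalgebra generated by the $H_{i,r}$ (equivalently by the $\psi_{i,r},\varphi_{i,r}$). Granting this, for part (1) I take a finite-dimensional irreducible $V$, regard it as a topologically free $\mathbb C[[\hbar]]$-module graded by the finite set of classical $\mathfrak h$-weights (joint eigenvalues of $H_{1,0},\ldots,H_{m+n+1,0}$), and choose a weight maximal in the usual partial order. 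Each $E_{j,k}$ shifts the $\mathfrak h$-weight by $\alpha_j$, so that maximal weight space is killed by all $E_{j,k}$; since the commuting family $\{\psi_{i,r},\varphi_{i,r}\}$ preserves this finite-rank free module, one extracts (by reduction modulo $\hbar$ and lifting, or by working over $\mathbb C((\hbar))$ and clearing denominators) a common eigenvector $v_0$. Irreducibility forces $V=U_\hbar^- v_0$ and thus $V=V(\delta)$, where $\delta$ records $\psi_i(z)v_0=\delta_i^+(z)v_0$ and $\varphi_i(z)v_0=\delta_i^-(z)v_0$; the compatibility of the two expansions and the relation $\psi_{i,0}\varphi_{i,0}=1$ follow from the definitions of $\psi_i,\varphi_i$.

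For the forward direction of (2), I restrict $V(\delta)$ to the rank-one subalgebra $U_\hbar(L\mathfrak g)_i$ generated by $E_{i,k},F_{i,k},H_{i,r}$. For an even node $i\neq m+1$ this is a homomorphic image of the quantum loop algebra of $\mathfrak{sl}_2$, the cyclic submodule generated by $v_0$ is a highest- (or lowest-) weight $U_\hbar(L\mathfrak{sl}_2)$-module, and finite-dimensionality together with the Chari--Pressley criterion produce a monic polynomial $P_i^\delta$, with nonzero constant term (forced because $\psi_{i,0}$ and $\varphi_{i,0}$ act invertibly), such that $\delta_i^+(z)$ and $\delta_i^-(z)$ are the expansions at $z=\infty$ and at $z=0$ of $q^{-d_i a_{ii}/2}P_i^\delta(q^{d_i a_{ii}}z)/P_i^\delta(z)$; this is exactly (\ref{Q3.1}). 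For the odd isotropic node $i=m+1$ (where $a_{m+1,m+1}=0$), $U_\hbar(L\mathfrak g)_{m+1}$ is a quantum loop $\mathfrak{gl}(1|1)$, and one invokes the analogous classification for that algebra, whose finite-dimensional irreducible highest weights are governed by a ratio $P_{m+1}^\delta(z)/Q_{m+1}^\delta(z)$ of two monic polynomials with nonzero constant terms, yielding (\ref{Q3.2}).

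For the converse, given Drinfeld data as in (2) I would construct a finite-dimensional $U_\hbar(L\mathfrak g)$-module carrying the highest weight $\delta$ and then conclude that its irreducible quotient, which is $V(\delta)$, is finite-dimensional. The construction is by ordered tensor products of elementary modules: for each even node $i$ and each root $a$ of $P_i^\delta$ one takes an evaluation module, obtained by pulling a finite-dimensional highest-weight $U_q(A(m,n))$-module back along an evaluation homomorphism $U_\hbar(L\mathfrak g)\to U_q(A(m,n))$ specialized at $a$; for the node $m+1$ one uses the corresponding $\mathfrak{gl}(1|1)$-type evaluation module attached to the pair $(P_{m+1}^\delta,Q_{m+1}^\delta)$. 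Arranging the tensor factors so that the highest-weight lines multiply to a cyclic highest-weight vector of weight $\delta$ gives the desired surjection onto $V(\delta)$.

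The real obstacles I expect are threefold. First, establishing the triangular decomposition together with an explicit PBW basis of $U_\hbar(L\mathfrak g)$ in the $\hbar$-adic super setting, correctly incorporating the fourth-order Serre relations (\ref{QL7})--(\ref{QL71}) at the odd node; this is the technical backbone, which I would take from the companion paper. Second, the node $m+1$: quantum loop $\mathfrak{gl}(1|1)$ has genuinely different, non-semisimple representation theory, so the appearance of the two Drinfeld polynomials $P_{m+1}^\delta,Q_{m+1}^\delta$ and the precise finite-dimensionality criterion there require a separate treatment rather than a transcription of the $\mathfrak{sl}_2$ case. Third, in the converse direction one must actually produce the relevant evaluation homomorphisms and verify that the ordered tensor products are cyclic on the highest-weight line despite the non-semisimplicity of the module category; the even-node reductions and the extraction of the highest-weight vector, by contrast, are routine adaptations of the non-super Chari--Pressley arguments.
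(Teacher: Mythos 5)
The paper does not prove this theorem: immediately before the statement it says ``The proof of this theorem will be published separately,'' so there is no argument of the author's to compare yours against. What I can assess is the proposal on its own terms. Your plan is the standard Chari--Pressley strategy (triangular decomposition, extraction of a highest-weight vector from a maximal $\mathfrak h$-weight space, rank-one reduction node by node for the forward direction, ordered tensor products of evaluation modules for the converse), adapted to the $\mathbb Z_2$-graded, $\hbar$-adic setting, and this is a sensible route that is consistent with the shape of the statement (a single Drinfeld polynomial at even nodes, a pair $(P^{\delta}_{m+1},Q^{\delta}_{m+1})$ at the odd isotropic node, mirroring Theorem \ref{theorem01} for the Yangian).

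However, as written it is an outline rather than a proof: every step that actually carries weight is deferred. The triangular/PBW decomposition of $U_{\hbar}(L\mathfrak g)$ with the quartic Serre relations (\ref{QL7})--(\ref{QL71}) at the odd node is assumed; the classification of finite-dimensional highest-weight modules for the loop $\mathfrak{gl}(1|1)$ (equivalently $\mathfrak{sl}(1,1)$) block at $i=m+1$ --- precisely the place where the super phenomena live, and the case the present paper itself singles out for detailed treatment in its isomorphism proof --- is invoked rather than established, and it is exactly there that the pair $(P^{\delta}_{m+1},Q^{\delta}_{m+1})$ and the ``nonzero free term'' condition must be derived; and in the converse direction the existence of evaluation homomorphisms for the quantum affine superalgebra (known for $\mathfrak{gl}(m|n)$-type, but requiring care to restrict to $A(m,n)$) and the cyclicity of the ordered tensor product of evaluation modules in a non-semisimple module category are asserted, not proved. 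You name these three points yourself as ``obstacles,'' which is honest, but it means the proposal identifies where the proof would be rather than supplying it; since the paper defers the proof to a separate publication, none of these gaps can be closed by citation to the present text.
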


\vspace{1cm}

\section{Main result}\label{s2}

We denote (as above) by $L\mathfrak{g}$ the Lie algebra (Laurent polynomial) loops with values in the simple algebra (basic Lie superalgebra) of $\mathfrak{g}$. In this subsection we construct a homomorphism of the quantum loop (super)algebra $ U_{\hbar}(L\mathfrak{g})$ into the quantum (super)algebra $Y_{\hbar}(\mathfrak{g})$ from which the Yangian $Y(\mathfrak{g})$ is obtained by specialization for $\hbar = 1$. In constructing, we confine ourselves to the special case of a basic Lie superalgebra of type $A(m,n)$, that is, we assume that $\mathfrak{g} = A(m,n)$. To construct this homomorphism, we need descriptions of quantum affine algebras (superalgebras) and Yangians in terms of generating functions of generators. 

Let $I = \{1,2, \ldots, m + 1, \ldots m+n+1\} $. This set will be identified with the set of simple roots $\{\alpha_1, \alpha_2, \ldots, \alpha_{m}, \alpha_{m+1}, \ldots \alpha_{m+n+1}\}$ base we assume that $\alpha_{m+1}$ is an odd root, that is, we deal with a distinguished system of simple roots of the basic Lie superalgebra $A(m,n)$. 

Let $\{E_{i, r}, F_{i, r}, H_{i, r} \}_{i \in I, r \in \mathbb{Z}}$ be the loop generators of the quantum affine algebra $U_{\hbar}(L\mathfrak{g})$, and $\{e_{i, k}, f_{i, k}, h_{i, k}\}_{i \in I, k \in \mathbb{Z}_+} $ are the generators of the Yangian $Y_{\hbar}(\mathfrak{g})$.

Let $\mathfrak{g} = A(m,n)$, $\widehat{Y_{\hbar}(\mathfrak{g})}$ be a completion of Yangian $Y_{\hbar}(\mathfrak{g})$ with respect to $h$-adic topology, defined by natural filtration. I recall, that this filtration defined by $N$-grading, which, is defined as follows:
$$deg(h_{i,k}) = deg(x^{\pm}_{i,k}) =k, deg(\hbar) = 1. $$ 

Define the map

\begin{equation}
\Phi : U_{\hbar}((L\mathfrak{g}) \rightarrow  \widehat{Y_{\hbar}(\mathfrak{g})},\label{7261}
\end{equation}
on generators by the following formulas:

\begin{eqnarray}
&\Phi(H_{i,r}) = \dfrac{\hbar}{q_i - q_i^{-1}}\sum_{k \geq 0} t_{i,k} \frac{r^k}{k!}, \quad \label{72620}\\
&\Phi(E_{i,r}) =  e^{r \sigma_i^+}\sum_{m \geq 0} g_{i,m}^+ e_{i, m}, \quad \label{72621}\\
&\Phi(F_{i,r}) =  e^{r \sigma_i^-}\sum_{m \geq 0} g_{i,m}^- f_{i, m}. \quad \label{72622}
\end{eqnarray}

Here, as above, we use the following notations $q = e^{\hbar/2}$, $q_i = q^{d_i}$, $d_i$ are elements  of the symmetrizing matrix for the Cartan matrix  Lie (super)algebra $\mathfrak{g} = A(m,n)$. We use also the system of logarithmic generators generators $\{t_{i,r} \}_{i \in I, r \in \mathbb{N}}$ of commutative subalgebra $Y_{\hbar}(\mathfrak{h}) \subset Y_{\hbar}(\mathfrak{g})$ generated by generators $\{ h_{i,r} \}_{i \in I, r \in \mathbb{N}}$. These logarithmic generators are defined by the following equality for the generating functions:

\begin{equation}
\hbar \sum_{r \geq 0} t_{i,r} u^{-r-1} = \log\left(1 + \sum_{r \geq 0} h_{i,r} u^{-r-1}\right). \label{7.1.1}
\end{equation}

The elements $\{g^{\pm}_{i,m}\}_{i \in I, m \in \mathbb{N}}$ lie in the completion of the algebra $Y_{\hbar}(\mathfrak{h})$ and are defined as follows. Consider the following formal power series:

$$\gamma_i(v) = \hbar \sum_{r \geq 0} \dfrac{t_{i,r}}{r!}\left(-\frac{d}{dv}\right)^{r+1}G(v).$$

Then 

$$\sum_{m \geq 0} g^{\pm}_{i,m} v^m = \left(\dfrac{\hbar}{q_i - q_i^{-1}}\right)^{1/2} \exp\left(\frac{\gamma_i(v)}{2}\right). $$

Finally, $\sigma_i^{\pm}$ are homomorphisms of sub-superalgebras
$$\sigma_i ^{\pm}: Y_{\hbar}(\mathfrak{b}_{\pm})(\subset Y_{\hbar}(\mathfrak{g})) \rightarrow Y_{\hbar}(\mathfrak{b}_{\pm}),$$
which are defined on the generators $\{h_{i, r}, e_{i,r}, f_{i,r}\}$ as follows. They leave the generators $h_{i,k}$ fixed, and the other generators act as shifts:
$$\sigma^+_i: e_{j,r} \rightarrow e_{j, r + \delta_{ij}}, \quad \sigma^-_i: f_{j,r} \rightarrow f_{j, r + \delta_{ij}}.$$

The elements  $\{g^{\pm}_{i,m} \}_{i \in I, m \in \mathbb{N}}$   belong to the completion of the algebra $Y_{\hbar}(\mathfrak{h})$ and are defined as follows. Consider the following formal power series: 
$$G(v) = \log\left(\dfrac{v}{e^{v/2} - e^{v/2}}\right) \in Q[[v]]$$
and define  $\gamma_i \in \hat{Y^0[v]}$ by formula:

$$\gamma_i(v) = \hbar \sum_{r \geq 0} \dfrac{t_{i,r}}{r!}\left(-\frac{d}{dv}\right)^{r+1}G(v).$$

Then 

$$\sum_{m \geq 0} g^{\pm}_{i,m} v^m = \left(\dfrac{\hbar}{q_i - q_i^{-1}}\right)^{1/2} \exp\left(\frac{\gamma_i(v)}{2}\right). $$

\vspace{1cm}

Let $\sigma_i^{\pm}$ subsuperalgebra homomorphisms
$$\sigma_i^{\pm}: Y_{\hbar}(\mathfrak{b}_{\pm}) (\subset Y_{\hbar}(\mathfrak{g})) \rightarrow Y_{\hbar}(\mathfrak{b}_{\pm}),$$
which are given on the generators $\{h_{i,r}, e_{i,r}, f_{i,r} \}$ as follows.  They leave the generators $h_{i,k}$ fixed, and to the other generators act as shifts: $\sigma^+_i: e_{j,r} \rightarrow e_{j, r + \delta_{ij}}$, $\sigma^-_i: f_{j,r} \rightarrow f_{j, r + \delta_{ij}}.$

Let $\mathfrak{g} = A(m,n)$. Let also  $\widehat{Y_{\hbar}(\mathfrak{g})}$ be the completion of the Yangian with respect to its $N$-grading. Consider the following mappings 
\begin{equation}
c: U_{\hbar}(L\mathfrak{g}) \rightarrow U(L\mathfrak{g}), 
\end{equation}
which given by $\hbar \rightarrow 0$, and also map  
\begin{equation}
d: U(L\mathfrak{g}) = U(\mathfrak{g}[z, z^{-1}]) \rightarrow  U(\mathfrak{g}),
\end{equation}  
which given by formula  $d(f(z)) = f(1)$. Then the kernel of the composition of these mappings
$$e = d \circ c : U_{\hbar}(L\mathfrak{g}) \rightarrow U(\mathfrak{g}) $$
we denote by $I:= Ker (e)$. We define the completion of a quantum loop algebra with respect to the $I$-adic topology given by the powers of the ideal $I$: 

\begin{equation}
\widehat{U_{\hbar}(L\mathfrak{g})} : = lim_{n \rightarrow \infty} U_{\hbar}(L\mathfrak{g})/I^n.
\end{equation}

We state the main result of this paper.

\begin{theorem} \label{th:main}
The map  $\Phi$, given by the formulas (\ref{72620}) -- (\ref{72622}), defines a monomorphism of associative superalgebras over  $\mathbb{Q}[[\hbar]]$:
\begin{equation}\label{eq:mr}
\Phi :U_{\hbar}(L\mathfrak{g}) \rightarrow \widehat{Y_{\hbar}(\mathfrak{g})},
\end{equation}
which extends to an isomorphism of completions
\begin{equation}\label{eq:mr1}
\widehat{\Phi} : \widehat{U_{\hbar}(L\mathfrak{g})} \rightarrow \widehat{Y_{\hbar}(\mathfrak{g})}.
\end{equation}
\end{theorem}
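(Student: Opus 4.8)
The plan is to follow the method of Gautam and Toledano-Laredo, adapted to the super setting. The argument has three parts: (i) $\Phi$ is a well-defined homomorphism of superalgebras over $\mathbb{Q}[[\hbar]]$; (ii) $\Phi$ is injective; (iii) the induced map $\widehat\Phi$ on completions is surjective, hence an isomorphism. As a preliminary one checks that the right-hand sides of (\ref{72620})--(\ref{72622}) converge in $\widehat{Y_\hbar(\mathfrak{g})}$: the logarithmic generator $t_{i,k}$ defined by (\ref{7.1.1}) has $N$-degree $k$, the elements $g^\pm_{i,m}$ built from $\gamma_i(v)$ have non-negative degree, and the $\sigma_i^\pm$ are homogeneous, so the infinite sums make sense in the graded completion. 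Relation (\ref{QL1}) is then immediate, since every $\Phi(H_{i,r})$ lies in the commutative subalgebra $Y_\hbar(\mathfrak{h})$. For (\ref{QL2}) and (\ref{QL3}) I would pass to generating functions: the Yangian relations (\ref{Y2}), (\ref{Y3}) and their odd-node counterparts (\ref{Y2'}), (\ref{Y4'}) control $[h_i(u), x^\pm_j(v)]$, and after rewriting everything in the $t_{i,r}$ and exponentiating, this becomes precisely $[\Phi(H_{i,r}), \Phi(E_{j,k})] = \tfrac{[r a_{ij}]_{q_i}}{r}\Phi(E_{j,r+k})$; the coefficients $[r a_{ij}]_{q_i}/r$ emerge from the Taylor expansion of $G(v) = \log\!\bigl(v/(e^{v/2}-e^{-v/2})\bigr)$ and its derivatives entering $\gamma_i$, which is the central analytic identity of the construction and is insensitive to parity.

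Relation (\ref{QL5}) reduces, via (\ref{Y5}) $[x^+_{i,k}, x^-_{j,l}] = \delta_{ij} h_{i,k+l}$, to matching the generating series of the $h_{i,k+l}$ with $(\psi_i(z)-\varphi_i(z))/(q_i-q_i^{-1})$, which is exactly the definition of $\psi_i, \varphi_i$ together with the formula for the $g^\pm_{i,m}$. The current relations (\ref{QL4}) follow from the Yangian current relations (\ref{Y4}) (and (\ref{Y4'}) at the odd node) plus the shift structure of $\sigma_i^\pm$: the $q$-(super)commutator on the left of (\ref{QL4}) is produced by the $e^{r\sigma_i^\pm}$ factors combined with the commutation of the $g^\pm_{i,m}$ with the $e_{j,l}$ up to the scalar dictated by (\ref{Y2}). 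The Serre relations (\ref{QL6}) are deduced from (\ref{Y6}) by expanding the exponentials and collecting, and the extra relations (\ref{QL7}), (\ref{QL71}) from (\ref{Y7}). It is in these last steps that the super structure is genuinely used: the odd generators $x^\pm_{m+1,k}$, the degenerate relations (\ref{Y2'}), (\ref{Y4'}), and the relation (\ref{Y7}) must be carried through with the sign $(-1)^{p(a)p(b)}$ in $[a,b]_q$ inserted at every bracket; the verification for the $F$-generators mirrors that for the $E$-generators.

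For (ii) I would filter $U_\hbar(L\mathfrak{g})$ by powers of the ideal $I$ and $\widehat{Y_\hbar(\mathfrak{g})}$ by the filtration induced by the $N$-grading, check that $\Phi(I^n)$ lands in the $n$-th term of the latter, and pass to associated graded superalgebras. Both can be identified with (a completion of) the enveloping algebra of the current superalgebra $\mathfrak{g}[z]$ --- on the loop side through the substitution $z\mapsto 1+\hbar w$ that turns the loop variable into the Yangian $N$-degree --- and under these identifications $\mathrm{gr}\,\Phi$ is the tautological isomorphism, since the leading terms of (\ref{72620})--(\ref{72622}) are the obvious generators up to the scalars $(\hbar/(q_i-q_i^{-1}))^{1/2}$, which are invertible in $\mathbb{Q}[[\hbar]]$. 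Since both completed algebras are complete and separated for these filtrations, $\widehat\Phi$ is an isomorphism; and since the $I$-adic topology on $U_\hbar(L\mathfrak{g})$ is separated (a consequence of a PBW-type basis, proved in Part II), $U_\hbar(L\mathfrak{g})$ embeds into its completion, so $\Phi$ is a monomorphism. For (iii), surjectivity of $\widehat\Phi$ is obtained by inverting the defining formulas degree-by-degree: $\Phi(E_{i,0}) = g^+_{i,0}e_{i,0}$ modulo higher $N$-degree with $g^+_{i,0}$ invertible in the completed Cartan, so $e_{i,0}$ is in the image; then $h_{i,0}=[\Phi(E_{i,0}),\Phi(F_{i,0})]$ modulo higher degree, then all $h_{i,k}$ from the $\Phi(H_{i,r})$, and all $e_{i,k}, f_{i,k}$ by combining the $\Phi(E_{i,r}), \Phi(F_{i,r})$ over varying $r$ with the shift operators; an induction on $N$-degree then shows the image is everything.

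The main obstacle is the relation-checking in part (i): obtaining (\ref{QL2})--(\ref{QL5}) requires the precise combinatorial identities for $G(v)$ and its derivatives, and deriving the Serre-type relations (\ref{QL6}), (\ref{QL7}), (\ref{QL71}) from their Yangian counterparts while correctly tracking all signs coming from the odd node $m+1$ is delicate and error-prone. The second tricky point is the identification of the associated graded algebras in part (ii), which rests on PBW theorems for both $U_\hbar(L\mathfrak{g})$ and $\widehat{Y_\hbar(\mathfrak{g})}$ in the super setting --- which is why several auxiliary statements are deferred to the second part of the paper.
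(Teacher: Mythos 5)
Your overall strategy is the same as the paper's: follow Gautam--Toledano-Laredo, verify that $\Phi$ respects the loop relations by rewriting the Yangian relations through generating functions, the Borel transform and the shift operators $\sigma_i^{\pm}$, and then obtain the isomorphism of completions from the classical statement plus PBW/flatness. The genuine gap is in your treatment of relation (\ref{QL5}). You claim that, after using (\ref{Y5}), the identity $[\Phi(E_{i,r}),\Phi(F_{i,l})]=\Phi\bigl((\psi_{i,r+l}-\varphi_{i,r+l})/(q_i-q_i^{-1})\bigr)$ is ``exactly the definition of $\psi_i,\varphi_i$ together with the formula for the $g^{\pm}_{i,m}$''. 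It is not: commuting the factors leaves $\sum_{m,n\geq 0} g^+_{i,m}g^-_{i,n}h_{i,m+n+r+l}$, i.e. $e^{(r+l)v}\exp(\gamma_i(v))\big|_{v^n=h_{i,n}}$, on one side, and the image under $\Phi$ of the exponential series defining $\psi_i,\varphi_i$ (rewritten through $B(t_i(u))$) on the other; the equality of these two expressions is exactly the nontrivial identity that justifies the choice of $G(v)$ and of the coefficients $g^{\pm}_{i,m}$, and it is the technical heart of the whole construction, not a definition.

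The paper proves this identity (Lemma \ref{lm:1}, stated for the $\mathfrak{sl}(1,1)$ case, which covers the odd node) not by direct formal manipulation but by a representation-theoretic device absent from your proposal: it evaluates both sides through the maps $D^Y$ and $D^U$ attached to Drinfeld-polynomial data of finite-dimensional irreducible modules (formulas (\ref{eq:DUpsi1})--(\ref{eq:DB1}) of Proposition \ref{pr:1}), matches the resulting expressions in the roots after the substitution $A_p=e^{a_p}$, $B_p=e^{b_p}$, and then uses the injectivity of $D^Y$ and $D^U$ (Proposition \ref{pr:2}) to pull the identity back to $Y^0$ and $U^0$. Without this mechanism, or an equally explicit substitute establishing the identity behind Lemma \ref{lm:1}, your step (i) does not go through as written; you do flag the ``combinatorial identities for $G(v)$'' as the main obstacle, but you neither state the needed identity nor indicate how to prove it. The rest of your plan --- the generating-function rewrites for (\ref{QL2})--(\ref{QL4}) and the Serre-type relations (\ref{QL6}), (\ref{QL7}), (\ref{QL71}), and the associated-graded/flat-deformation argument for the completions --- agrees with the paper's (much sketchier) treatment, which likewise defers the PBW-type input to the second part of the paper and to \cite{G-TL}.
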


\vspace{1cm}

\section{Proof of the main result}\label{s3}

The proof decomposes into several auxiliary statements.

\subsection{The Lie superalgebra $\mathfrak{sl}(1,1)$ case}

We first consider an important special case of the Lie superalgebra $\mathfrak{sl}(1,1)$. 

It is easy to see that the Yangian $Y(\mathfrak{sl}(1,1))$ is generated by the generators $h_n, e_n, f_n, n \in \mathbf{Z}_0 $, which satisfy the following defining relations: 
\begin{eqnarray}
&[h_k,h_l] = 0, \quad \\
&[h_k, e_l] = [h_k, f_l] = 0, \quad \\
&[e_k, e_l] = [f_k, f_l] = 0, \quad \\
&[e_k, f_l] = h_{k+l}.  \quad \label{Yp5}
\end{eqnarray}

Similarly, the quantum loop algebra $U_{\hbar}(L\mathfrak{sl}(1,1))$ is generated by the generators $\{E_{n}, F_{n}, H_{n}\}_{n \in \mathbb{Z}}$, which satisfy the following system of defining relations:

\begin{eqnarray}
&[H_r, H_s] = 0, \quad \\
&[H_r, E_s] = [H_r, F_s] = 0, \quad \\
&[E_r, E_s] = [F_r, F_s] = 0, \quad \\
&[E_r, F_s] =   \dfrac{\psi_{r+s} - \varphi_{r+s}}{e^{\hbar/2} - e^{-\hbar/2}}, \quad \label{QLp5}
\end{eqnarray}

for all  $r,s \in \mathbb{Z}$. Here as above elements $\psi_r, \varphi_r$ are defined by the following formulas:
$$\psi_i(z) = \sum_{r \geq 0} \psi_rz^{-r} = \exp(\frac{\hbar}{2}H_0)\exp((e^{\hbar/2} - e^{-\hbar/2})\sum_{s \geq 1} H_s z^{-s}),$$
$$\varphi_i(z) = \sum_{r \geq 0} \varphi_{r}z^{r} = \exp(-\frac{\hbar}{2}H_{0})\exp(-(e^{\hbar/2} - e^{-\hbar/2})\sum_{s \geq 1} H_{-s} z^{s}).$$

First, we describe homomorphism  $\Phi: U_{\hbar}(\mathfrak{sl}(1,1)) \rightarrow Y_{\hbar}(\mathfrak{sl}(1,1))$.

We first recall the definition of the Borel transformation (more precisely, the inverse of the Borel transform), which is a discrete analog of the Laplace transform. We denote by $B$ the transformation associating the functions $f(u) \in A[[u]]$ with values in some associative algebra $ A $, the function $B(f)(v) \in u^{-1}A[[u^{-1}]]$, defined as
\begin{equation}\label{def:Bor}
f(u)= \sum_{k=0}^{\infty}f_ku^{-k-1} \rightarrow B(f)(v) =  \sum_{r=0}^{\infty} \dfrac{f_r v^r}{r!}.
\end{equation}
We note the following properties of the Borel transform of the generating function $t(u)$ of the logarithmic generators $\{t_k\}_{k=0}^{\infty}$.

Because the,
$$[h(u), e_k] = [h(u), f_k] = 0, $$
then it follows that u
$$[t(u), e_k] = [t(u), f_k] = 0, $$
which means that 

\begin{equation}
[B(t(u))(v), e_k] = [B(t(u))(v), f_k] =0.
\end{equation}

Easy to check that  
\begin{equation} \label{eq:Bor_cond}
B(\log(1 - pu^{-1})) = \dfrac{1 - e^{pv}}{v}.
\end{equation}

Indeed, using the Borel transformation properties, which coincide with the properties of the Laplace transform, we obtain the following equalities:
$$B(\log(1 - pu^{-1})) = \frac{1}{v} B(\frac{d}{du}(\log(1 - p u^{-1}))) = \frac{1}{v} B\left(\dfrac{-pu^{-2}}{1 - pu^{-1}}\right)  $$
$$ = \frac{1}{v} B\left(\dfrac{-p}{u(u - p)}\right) = \frac{1}{v} B\left(\dfrac{1}{u} - \dfrac{1}{u - p} \right) = \frac{1}{v}(1 - e^{pv}).$$

To prove the theorem in this particular case, it suffices to verify the equivalence of the relations \ref{Y5}) and (\ref{QL5}).
We first note that
 
\begin{eqnarray}
&\Phi(E_r) = e^{r\sigma_+} \sum_{m \geq 0} g_m e_m = \sum_{m \geq 0} g^{(+,k)}_m e_m = e^{r\sigma_+}g(\sigma_+)e_0, \quad \label{E1} \\
& \Phi(F_r) = e^{r\sigma_-} \sum_{m \geq 0} g_m f_m = \sum_{m \geq 0} g^{(-,k)}_m f_m = e^{r\sigma_-}g(\sigma_-)f_0. \quad \label{E1}
\end{eqnarray}

Let us prove an explicit calculation of equality

\begin{equation} \label{eq:5}
[\Phi(E_r), \Phi(F_l)] = \dfrac{\Phi(\psi_{r+s}) - \Phi(\varphi_{r+s})}{e^{\hbar/2} - e^{\hbar/2}}. 
\end{equation}

Explicitly calculate the right and left sides of the equality to be proved (\ref{eq:5}). It is easy to see that

$$\Phi(E_r)\Phi(F_l) =  e^{r\sigma_+}g(\sigma_+)e_0 e^{r\sigma_-}g(\sigma_-)f_0.$$

As 
$$g(v) = \sum_{m \geq 0} g_m v^m = \sum_{m \geq 0} g_m v^m = \left(\dfrac{\hbar}{e^{\hbar} - e^{-\hbar/2}} \right)^{1/2}\exp(\gamma(v)/2)   $$
$$= \left(\dfrac{\hbar}{e^{\hbar/2} - e^{-\hbar/2}} \right)^{1/2}\exp\left(\frac{1}{2}B(t(u))\left(-\frac{d}{dv}\right) \left(\frac{d}{dv}\log\left(\dfrac{e^{v/2} - e^{-v/2}}{v} \right) \right)\right).  $$

For brevity we use the notation $\partial_v := \dfrac{d}{dv}$. Then 

$$g(v) =  \sum_{m \geq 0} g_m v^m = \left(\dfrac{\hbar}{e^{\hbar/2} - e^{-\hbar/2}} \right)^{1/2}\exp\left(\frac{1}{2}B(t(u))\left(-\partial_v\right) \left(\log\left(\dfrac{e^{v/2} - e^{-v/2}}{v} \right) \right)'\right).  $$

It is easy to check that

$$g(\sigma_+) e_0 = \sum_{m \geq 0} g_m \sigma_+^m e_0 = \sum_{m \geq 0} g_m  e_m.$$ 

Similarly, 

$$g(\sigma_-) f_0 = \sum_{m \geq 0} g_m \sigma_-^m e_0 = \sum_{m \geq 0} g_m  f_m.    $$

Note also that if the variables $u$ and $v$ commute, then 
$$ g(u) g(v) =  \left(\dfrac{\hbar}{e^{\hbar} - e^{-\hbar/2}} \right)\exp(\gamma(u)/2 + \gamma(v)/2) =$$  
$$ \left(\dfrac{\hbar}{e^{\hbar} - e^{-\hbar/2}} \right)\exp\left(\frac{1}{2}B(t(u_1))\left(-\frac{d}{du}\right) (\log(f_0(u)))' + \frac{1}{2}B(t(u_1))\left(-\frac{d}{dv}\right) \frac{d}{dv}(\log(f_0))'\right),$$

where $f_0(u) = \dfrac{e^{u/2} - e^{-u/2}}{u} .$

Now we can calculate $\Phi(E_r)\Phi(E_l)$. Actually, 

$$ \Phi(E_r)\Phi(E_l) =  e^{r\sigma_+}g(\sigma_+)e_0 e^{r\sigma_-}g(\sigma_-)e_0f_0 = g(\sigma_+) g(\sigma_-)e_r f_l $$
Similarly, 

$$ \Phi(F_l)\Phi(E_r) =  g(\sigma_+) g(\sigma_-)f_l e_r. $$

From this we immediately obtain that
$$[\Phi(E_r)\Phi(E_l), \Phi(E_l)\Phi(E_r)] =  [g(\sigma_+) g(\sigma_-)e_r f_l, g(\sigma_+) g(\sigma_-) f_l e_r] $$
$$ =   \sum_{m\geq 0}\sum_{n\geq 0} g_m g_n (e_{m+r}f_{n+l} - f_{n+l}e_{m+r}) = \sum_{m\geq 0}\sum_{n\geq 0} g_m g_n h_{m+n+r+l}$$

On the other hand,

$$\Phi(H_r) = \dfrac{B(t(u))(r)}{e^{\hbar/2} - e^{-\hbar/2}}. $$
Then  

$$\Phi(\psi(z)) = \sum_{r \geq 0} \Phi(\psi_r) z^{-r}= \exp\left(\dfrac{\hbar \Phi(H_0)}{2}\right) \exp\left(\left(e^{\hbar/2} - e^{\hbar/2}\right)\sum_{s \geq 1} \Phi(H_s) z^{-s}\right) $$
$$= \exp\left(\dfrac{\hbar \Phi(H_0)}{2}\right)\exp\left(\sum_{s \geq 1} B(t(u))(s) z^{-s}\right) $$ 
$$=  \exp\left(\frac{\hbar}{2(e^{\hbar/2} - e^{-\hbar/2})}t_0\right)\exp\left(\sum_{s \geq 1} B(t(u))(s) z^{-s}\right). $$

Similarly, 

$$\Phi(\varphi(z)) = \sum_{r \geq 0} \Phi(\varphi_r) z^{r}=  \exp\left(\dfrac{-\hbar \Phi(H_0)}{2}\right)\exp\left(\sum_{s \geq 1} B(t(u))(-s) z^s\right). $$

We describe the action of generators, respectively, of a quantum loop superalgebra and a Yangian, in finite-dimensional irreducible modules. We need the results of the papers \cite{St11} and \cite{St22}, in which the classification of finite-dimensional irreducible Yangian modules of a special linear Lie superalgebra is given. We need an explicit definition of the action of the Cartan generators on the highest vectors of finite-dimensional irreducible modules over, respectively, the Yangian and the quantum loop algebra.

We denote by $D^Y$ the Yangian morphism into a finite-dimensional irreducible module given by a family of Drinfeld polynomials. We now consider the case of Yangian $Y(\mathfrak{sl}(1,1))$. In this case the finite-dimensional irreducible Yangian module is given by a pair of Drinfeld polynomials $P^d(u)$ and $Q^d(u)$ with leading coefficient equal to 1. In this case the polynomials are uniquely determined by their complex roots: $a_1, \ldots, a_n$ and $b_1, \ldots , b_n$. In this case, the action of the generating function of the Yangian on the highest vector is given by the following formula (see \cite{St11}):

\begin{equation} \label{eq:h1}
h(u)v_0 = \left(1 + \sum_{k \geq 0} d_k u^{-k-1}\right) v_0, \quad \dfrac{P^d(u)}{Q^d(u)} = 1 + \sum_{k \geq 0} d_k u^{-k-1}, \quad
\end{equation}   

where as above  $h(u) = 1 + \sum_{k \geq 0} h_k u^{-k-1}$ be a generating function of Cartan generators. In other words

\begin{equation} \label{eq:h2}
h(u)v_0 =  \dfrac{P^d(u)}{Q^d(u)} v_0 = R(m) = \dfrac{(u - a_1)\cdot \ldots (u - a_n)}{(u - b_1)\cdot \ldots (u - b_n))} v_0. 
\end{equation}   

Let's define then  

\begin{equation}\label{eq:DY}
D^Y(h(u) = \dfrac{P^d(u)}{Q^d(u)} = \dfrac{(u - a_1)\cdot \ldots (u - a_n)}{(u - b_1)\cdot \ldots (u - b_n))}. 
\end{equation}


Assuming the quantities  $a_1, \ldots, a_n, b_1, \ldots, b_m$ as parameters, we obtain the mapping

$$D^Y: Y^0 \rightarrow \mathbf{C}[\hbar, a_1, \ldots, a_n, b_1, \ldots, b_n]. $$

Cartan subalgebra of the Yangian into a commutative polynomial ring defined by the formula (\ref{eq:DY}).

Similarly we define the map

$$D^U: U^0 \rightarrow S(m) = \mathbf{C}[q, A_1, \ldots, A_n, B_1, \ldots, B_n], $$

\begin{equation}\label{eq:DU}
D^U(\psi(z)) = \dfrac{P^{\delta}(z)}{Q^{\delta}(z)} = \dfrac{(z - A_1)\cdot \ldots (z - A_n)}{(z - B_1)\cdot \ldots (z - B_n))}. 
\end{equation}

We now compute the action of the homomorphism $D^Y: Y(\mathfrak{sl}(1,1)) \rightarrow End(V_{P,Q})$ onto $t(u)$, as well as the images of the generators $h_k$ and $t_k$ under the action of this mapping. The following proposition holds.

\begin{predl} \label{pr:1}
The following formulas are valid:
\begin{eqnarray}
&D^U(\psi_r) = \sum_{p=1}^n B_p^r (B_p - A_p) \left(\prod_{p' \neq p} \dfrac{B_p - A_{p'}}{B_p - B_{p'}}\right), \quad \label{eq:DUpsi1} \\    
&D^U(\varphi_r) = \sum_{p=1}^n B_p^{-r} (A_p - B_p) \left(\prod_{p' \neq p} \dfrac{B_p - A_{p'}}{B_p - B_{p'}}\right), \quad \label{eq:DUvarphi1} \\   
&D^U((q - q^{-1})H_k) = \frac{1}{p}\sum_{p=1}^n (B^p - A^p), \quad \label{eq:H} \\ 
&D^Y(h_r) = \sum_{p=1}^n b_p^r (b_p - a_p) \left(\prod_{p' \neq p} \dfrac{b_p - a_{p'}}{b_p - b_{p'}}\right), \quad \label{eq:Dh1} \\ 
&D^Y(t_r) = \dfrac{1}{r+1}\sum_{p=1}^n \dfrac{b^{r+1}_p - a^{r+1}_{p}}{\hbar}, \quad \label{eq:Dt1} \\
&D^Y(B(t(u))(v) = \sum_{p=1}^n \dfrac{\exp(b_p v) - \exp(a_p v)}{v}. \quad \label{eq:DB1}
\end{eqnarray}
\end{predl}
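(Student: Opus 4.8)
The plan is to derive all six identities directly from the explicit highest-weight formulas (\ref{eq:h1})--(\ref{eq:DY}) on the Yangian side and from (\ref{eq:DU}) on the quantum loop side, using only partial-fraction decomposition, the geometric series, the Taylor expansion of $\log(1-x)$, and the Borel-transform identity (\ref{eq:Bor_cond}).

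I would first treat $D^Y(h_r)$, $D^U(\psi_r)$ and $D^U(\varphi_r)$ together, since in each case the relevant generating function is, by (\ref{eq:DY}) and (\ref{eq:DU}), a ratio of monic polynomials of equal degree. Writing such a ratio in partial fractions,
$$\frac{(w-a_1)\cdots(w-a_n)}{(w-b_1)\cdots(w-b_n)} \;=\; 1+\sum_{p=1}^{n}\frac{c_p}{w-b_p},\qquad c_p=(b_p-a_p)\prod_{p'\neq p}\frac{b_p-a_{p'}}{b_p-b_{p'}},$$
where $c_p$ is the residue at $w=b_p$ obtained by splitting off the $p$-th factor of the numerator, one expands $c_p/(w-b_p)=\sum_{r\ge0}c_p b_p^{\,r}w^{-r-1}$ at $w=\infty$ and matches against $h(u)=1+\sum_k h_k u^{-k-1}$, resp. against $\psi(z)=\sum_r\psi_r z^{-r}$, to get (\ref{eq:Dh1}) and (\ref{eq:DUpsi1}); expanding the same simple fractions at $w=0$ and matching $\varphi(z)=\sum_r\varphi_{-r}z^{r}$ gives (\ref{eq:DUvarphi1}). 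Since for $\mathfrak{sl}(1,1)$ the unique simple root is isotropic, $a_{m+1,m+1}=0$, the prefactor $q^{-d_ia_{ii}/2}$ and the argument shift $q^{d_ia_{ii}}$ of (\ref{Q3.1}) are trivial here, so that only $\frac{P^{\delta}}{Q^{\delta}}$ itself enters.

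For the logarithmic and Cartan generators I would pass to logarithms. By the defining relation (\ref{7.1.1}),
$$D^Y\!\Big(\hbar\sum_{r\ge0}t_r u^{-r-1}\Big)=\log D^Y(h(u))=\sum_{p=1}^{n}\big(\log(1-a_p u^{-1})-\log(1-b_p u^{-1})\big)=\sum_{k\ge0}\Big(\sum_{p=1}^{n}\frac{b_p^{\,k+1}-a_p^{\,k+1}}{k+1}\Big)u^{-k-1},$$
using $\log(1-x)=-\sum_{j\ge1}x^{j}/j$; comparing the coefficient of $u^{-r-1}$ yields (\ref{eq:Dt1}). The same computation applied to $\log D^U(\psi(z))$, together with the exponential presentation $\psi_i(z)=\exp(\tfrac{\hbar d_i}{2}H_{i,0})\exp((q_i-q_i^{-1})\sum_{s\ge1}H_{i,s}z^{-s})$, gives the coefficient of $z^{-k}$ and hence (\ref{eq:H}). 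Finally, applying the inverse Borel transform $B$ term by term to the logarithmic series for $D^Y(\hbar\,t(u))=\log D^Y(h(u))$ and invoking (\ref{eq:Bor_cond}), $B(\log(1-pu^{-1}))(v)=(1-e^{pv})/v$, produces
$$D^Y\big(B(\hbar\,t(u))(v)\big)=\sum_{p=1}^{n}\Big(\frac{1-e^{a_p v}}{v}-\frac{1-e^{b_p v}}{v}\Big)=\sum_{p=1}^{n}\frac{e^{b_p v}-e^{a_p v}}{v},$$
which is (\ref{eq:DB1}).

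I expect the genuinely delicate part to be the bookkeeping of scalar factors rather than the algebra: the powers of $\hbar$ and of $q-q^{-1}$, the precise constant term and power of $z$ in the identification of $\psi(z)$ and $\varphi(z)$ with the rational function $P^{\delta}/Q^{\delta}$, and the $d_ia_{ii}/2$ shifts in (\ref{Y3.1}) and (\ref{Q3.1}). Once these normalisations are pinned down, each identity is a one-line consequence of partial fractions, of the logarithm expansion, or of (\ref{eq:Bor_cond}); and since they hold for arbitrary roots $a_p,b_p$ (equivalently, on every finite-dimensional irreducible module), they determine the algebra homomorphisms $D^Y:Y^0\to\mathbf{C}[\hbar,a_1,\dots,b_n]$ and $D^U:U^0\to\mathbf{C}[q,A_1,\dots,B_n]$ on generators.
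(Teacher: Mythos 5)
Your proposal is correct and takes essentially the same route as the paper: the paper's own proof consists only of the remark that the identities follow ``by direct calculation, by expansion in powers of the variable $u^{-1}$'', and your partial-fraction decomposition, $\log(1-x)$ expansion, and use of the Borel identity (\ref{eq:Bor_cond}) are precisely that calculation carried out explicitly. The only caveats are the normalization and indexing conventions you already flag (powers of $\hbar$ and $q-q^{-1}$, the constant term and $z$-power in identifying $\psi(z),\varphi(z)$ with $P^{\delta}/Q^{\delta}$), where the paper's stated formulas are themselves loose.
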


\begin{proof}
The proof is carried out by direct calculation, by expansion in powers of the variable $u^{-1}$ of the left and right sides of the equalities to be proved.

\end{proof}

From the sentence \ref{pr:1} it follows easily that

\begin{predl} \label{pr:2}
1) Homomorphisms 

\begin{equation}
D^Y: Y^0 \rightarrow \bigoplus_{n \geq 1} R(n),  \quad D^U: U^0 \rightarrow \bigoplus_{n \geq 1} S(m) 
\end{equation}
are injective.

\end{predl}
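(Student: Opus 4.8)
The plan is to deduce injectivity of the two maps $D^Y$ and $D^U$ from the explicit formulas obtained in Proposition \ref{pr:1}, reducing everything to a classical statement about symmetric functions. First I would observe that, by formula (\ref{eq:Dt1}), the image $D^Y(t_r)$ restricted to the component $R(n)$ is, up to the constant $\frac{1}{\hbar(r+1)}$, the difference of power sums $\sum_p b_p^{r+1} - \sum_p a_p^{r+1}$ in the two families of parameters $a_1,\dots,a_n$ and $b_1,\dots,b_n$. Equivalently, setting $p_k = \sum_p b_p^k - \sum_p a_p^k$, the generating function $D^Y(B(t(u))(v))$ from (\ref{eq:DB1}) encodes precisely the sequence $(p_k)_{k\ge 1}$. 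Since $Y^0$ (in the $\mathfrak{sl}(1,1)$ case) is a polynomial algebra in the logarithmic generators $\{t_r\}_{r\ge 0}$ — these generate the same algebra as $\{h_r\}$ because (\ref{7.1.1}) is an invertible triangular change of generators — an element of $Y^0$ is determined by its values on all the $t_r$.

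The key step is then the following: if an element $P\in Y^0$ lies in the kernel of $D^Y=\bigoplus_n D^Y|_{R(n)}$, then for every $n$ the polynomial $P$, viewed via (\ref{eq:Dt1}) as a polynomial in the power sums $p_1,p_2,\dots$ specialized to $n$ pairs of variables, vanishes identically on $\mathbf{C}^{2n}$. I would argue that the power sums $p_1,\dots,p_N$ (in $n$ pairs of variables) are algebraically independent once $n$ is large enough relative to $N$; more precisely, the map $(a_\bullet,b_\bullet)\mapsto(p_1,\dots,p_N)$ is dominant for $2n \ge N$, since already the $b$-variables alone give $N$ algebraically independent power sums when $n\ge N$. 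Hence a polynomial relation among $t_0,\dots,t_{N-1}$ that holds on all $R(n)$ forces the corresponding polynomial in $p_1,\dots,p_N$ to be zero, i.e. $P=0$ in $Y^0$. The argument for $D^U$ is formally identical: by (\ref{eq:H}) the generators $(q-q^{-1})H_k$ map to $\frac1k(\sum_p B_p^k - \sum_p A_p^k)$, again a difference of power sums, and $U^0$ is the polynomial algebra on the $H_k$; the same algebraic-independence / dominance argument applies over $\mathbf{C}[q]$ (or $\mathbf{C}[[\hbar]]$).

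Concretely I would carry this out in the order: (i) record that $\{t_r\}$ and $\{h_r\}$ generate $Y^0$ freely and likewise $\{H_k\}$ generate $U^0$ freely; (ii) quote Proposition \ref{pr:1}, in particular (\ref{eq:Dt1}) and (\ref{eq:H}), to identify $D^Y$ and $D^U$ on generators with (rescaled) power-sum differences; (iii) state and prove the linear-algebra lemma that for each fixed degree bound the evaluation at $n\gg 0$ pairs of points separates polynomials in finitely many power sums, using that power sums $p_1,\dots,p_N$ in $\ge N$ independent variables are algebraically independent (Newton's identities, or the Jacobian criterion at a generic point); (iv) conclude that the kernel of each graded map is trivial degree by degree, hence the maps into the direct sums $\bigoplus_n R(n)$, $\bigoplus_n S(m)$ are injective. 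The main obstacle I anticipate is purely bookkeeping: one must be careful that an element of $Y^0$ of bounded degree only involves finitely many of the $t_r$, so that "taking $n$ large enough" is legitimate, and that the $\hbar$-adic (resp. $q$-) coefficients do not interfere — this is handled by working coefficient-wise in $\hbar$ and noting each coefficient is an ordinary polynomial expression in the parameters. No genuinely hard geometry or representation theory is needed beyond Proposition \ref{pr:1}; the content is the elementary fact that differences of power sums in sufficiently many variables are algebraically independent.
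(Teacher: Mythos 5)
Your proposal is correct and follows the same route the paper intends: the paper gives no actual argument, merely asserting that Proposition \ref{pr:2} ``follows easily'' from the explicit formulas of Proposition \ref{pr:1}, and your derivation via (\ref{eq:Dt1}) and (\ref{eq:H}) — identifying the images of the free generators $t_r$, resp.\ $H_k$, with differences of power sums and invoking their algebraic independence in $n\gg 0$ variables — is the natural way to fill in that omitted step. The only points needing the care you already flag are the triangular change of generators $\{h_r\}\leftrightarrow\{t_r\}$ and the coefficient-wise treatment of $\hbar$ (resp.\ $q$), so the argument stands.
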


Now we can formulate and prove the most important auxiliary assertion

\begin{lm} \label{lm:1}
The following equality holds

\begin{equation}\label{eq:Imp}
\Phi \left(\dfrac{\psi_{k} - \varphi_k}{e^{\hbar/2} - e^{-\hbar/2}}\right)  =   \dfrac{\hbar}{e^{\hbar/2} - e^{-\hbar/2}} e^{kv} \exp(\gamma(v))|_{v^n = h_n}. 
\end{equation}
\end{lm}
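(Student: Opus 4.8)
The plan is to prove (\ref{eq:Imp}) by computing both sides explicitly as elements of the completed Cartan subalgebra $\widehat{Y_{\hbar}(\mathfrak{h})}$ and comparing the resulting power series in the auxiliary variable $v$. The only substantial ingredient is the Borel--transform identity (\ref{eq:Bor_cond}), $B(\log(1-pu^{-1}))(v)=(1-e^{pv})/v$, together with the defining relation $\hbar\,t(u)=\log h(u)$ of (\ref{7.1.1}); everything else is bookkeeping with generating functions.

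First I would treat the left--hand side. Starting from the formulas for $\Phi(\psi(z))$ and $\Phi(\varphi(z))$ already obtained above --- $\Phi(\psi(z))=\exp(\tfrac{\hbar}{2}\Phi(H_{0}))\exp(\sum_{s\ge 1}B(t(u))(s)\,z^{-s})$ and the analogous expression for $\Phi(\varphi(z))$ with $s\mapsto -s$, $z^{-s}\mapsto z^{s}$, where $\Phi(H_{r})=B(t(u))(r)/(e^{\hbar/2}-e^{-\hbar/2})$ and $B(t(u))(v)=\sum_{r\ge 0}t_{r}v^{r}/r!$ --- one extracts the coefficient of $z^{-k}$ (resp. $z^{k}$) to obtain $\Phi(\psi_{k})$ (resp. $\Phi(\varphi_{k})$) as an explicit series in the logarithmic generators $t_{r}$. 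Note that the single identity (\ref{eq:Imp}), read for $k>0$ and for $k<0$, encodes the $\psi$-modes and the $\varphi$-modes simultaneously.

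Next I would treat the right--hand side. Squaring the defining relation $\sum_{m\ge 0}g_{m}v^{m}=(\tfrac{\hbar}{q-q^{-1}})^{1/2}\exp(\gamma(v)/2)$ gives $g(v)^{2}=\tfrac{\hbar}{q-q^{-1}}\exp(\gamma(v))$, so the right--hand side of (\ref{eq:Imp}) is simply $(e^{kv}g(v)^{2})|_{v^{n}=h_{n}}=\sum_{N\ge 0}\bigl([v^{N}]\,e^{kv}g(v)^{2}\bigr)\,h_{N}$, a well--defined element of $\widehat{Y_{\hbar}(\mathfrak{h})}$. Using $\gamma(v)=\hbar\sum_{r\ge 0}\tfrac{t_{r}}{r!}(-\partial_{v})^{r+1}G(v)$ with $G(v)=\log\frac{v}{e^{v/2}-e^{-v/2}}$ (so that $-\partial_{v}G(v)=\tfrac12\coth(v/2)-\tfrac1v$), one rewrites this side, too, entirely in terms of the single datum $B(t(u))$ attached to $h(u)$. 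Matching the two $v$-series then establishes (\ref{eq:Imp}); combined with the computation of $[\Phi(E_{r}),\Phi(F_{l})]$ above, this yields exactly the compatibility of $\Phi$ with relation (\ref{QL5}).

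The hard part is this last matching, which is a ``discrete-to-continuous'' resummation: one must recognise the series in $z$ built from the \emph{integer} samples $B(t(u))(\pm s)$, $s\ge 1$ (governing $\psi$ and $\varphi$), as the exponential generating function $e^{kv}\exp(\gamma(v))$ in the formal variable $v$ (governing the $g^{\pm}_{i,m}$), using only $\sum_{s\ge 1}x^{s}/s=-\log(1-x)$ and its derivatives. Two things need care: convergence --- the equality lives in the $\hbar$-adic / $N$-graded completion $\widehat{Y_{\hbar}(\mathfrak{h})}$, so one must check the relevant series belong there --- and the scalar prefactors $\hbar$, $q-q^{-1}$ and $\exp(\pm\tfrac{\hbar}{2}\Phi(H_{0}))$. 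A convenient way to carry out the comparison without formal ambiguities is to evaluate on an arbitrary finite--dimensional irreducible module $V_{P,Q}$, where $h(u)$ acts by $\prod_{p}\frac{u-a_{p}}{u-b_{p}}$: by Proposition~\ref{pr:1}, in particular (\ref{eq:DB1}), both sides of (\ref{eq:Imp}) become explicit elementary expressions in the roots $a_{p},b_{p}$ and the identity becomes transparent; since $D^{Y}$ is injective by Proposition~\ref{pr:2} and extends to the completion $\widehat{Y^{0}}$, this proves (\ref{eq:Imp}) as an operator identity.
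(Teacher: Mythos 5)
Your final route --- evaluating both sides on highest-weight data via $D^Y$ (using the formulas of Proposition \ref{pr:1}, in particular (\ref{eq:DB1}) and (\ref{eq:Dh1})), recognising the resulting elementary expressions in $a_p,b_p$ as $D^U(\psi_k)$, resp.\ $D^U(\varphi_k)$, under the substitution $A_p=e^{a_p}$, $B_p=e^{b_p}$, and concluding by the injectivity of Proposition \ref{pr:2} extended to the completion --- is exactly the paper's proof of Lemma \ref{lm:1}, so the proposal is correct and follows essentially the same approach. The direct $v$-series resummation sketched in your opening paragraphs is not carried out in the paper (and you rightly flag it as the hard part); the paper instead goes straight to the module evaluation, computing $\gamma(v)$ via shifts $G(v+b_p)-G(v+a_p)$ and using $\lim_{v\to b_p}\frac{v-b_p}{e^{v}-e^{b_p}}=e^{-b_p}$ to carry out the substitution $v^{n}=h_{n}$.
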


\begin{proof}
We first calculate the left-hand side of the  equality.

$$\hbar e^{kv}  \exp(B(-\partial)G'(v)|_{v^n = h_n} = \hbar e^{kv} \exp \left( \sum_{p=1}^n \dfrac{e^{b_p (\partial)} - e^{a_p (\partial)}}{\partial}\right) G(v)|_{v^n = h_n} $$
$$= \hbar e^{kv} \sum_{p=1}^n \exp \left( G(v + b_p) - G(v+a_p) \right)|_{v^n=h_n}$$ 
$$=  \hbar e^{kv} \prod_{p=1}^n \dfrac{v - b_p}{v - a_p} \cdot \dfrac{e^{(v-a_p)/2} - e^{-(v-a_p)/2}}{e^{(v-b_p)/2} - e^{-(v-b_p)/2}}|_{v^n=h_n} .$$

So, we obtain  

$$D^Y\left(\hbar e^{kv} \prod_{p = 1}^n \dfrac{v - b_p}{v - a_p} \cdot \dfrac{e^{(v - a_p)/2} - e^{-(v - a_p)/2}}{e^{(v - b_p)/2} - e^{-(v - b_p)/2}}|_{v^n = h_n} \right). $$

A direct calculation proves the following assertion. 

\begin{predl} \label{fs1}
Let  $F(v) = \sum_{k=0}^{\infty} f_k v^k$. Then  
$$F(v)|_{v^n = h_n} = \sum_{k=0}^{\infty} f_k h_k.$$
\end{predl}

Taking into account the equality (\ref{eq:Dh1}) and the sentence \ref{fs1}, we get that the last equality is

$$D^Y\left(\hbar e^{kv}\exp(\gamma v)|_{v^n=h_n}\right) = D^Y\left(\hbar e^{kv} \prod_{p=1}^n \dfrac{v - b_p}{v - a_p} \cdot \exp(\frac{b_p - a_p}{2}) \cdot \dfrac{e^v - e^{a_p}}{e^v - e^{b_p}}|_{v^n = h_n}\right).$$
Taking into account, that  $\lim_{v\rightarrow b_p}\dfrac{v-b_p}{e^v - e^{b_p}} = e^{-b_p}$ we obtain, that last eqiality is equal 

$$\hbar \sum_{p=1}^n e^{k b_p} \hbar \prod_{p'\neq p} \dfrac{b_p - b_{p'}}{b_p - a_{p'}} \cdot \dfrac{1}{b_p - a_p} \cdot \dfrac{e^{b_p} - e^{a_{p'}}}{e^{b_p} - e^{b_{p'}}} \cdot (e^{b_p} - e^{a_p})  $$
$$ =\hbar \sum_{p=1}^n e^{k b_p}  (e^{b_p} - e^{a_p}) \prod_{p'\neq p} \dfrac{e^{b_p} - e^{a_{p'}}}{e^{b_p} - e^{b_{p'}}}. $$

By the formula (\ref{eq:DUpsi1}), we see that

$$(\dfrac{\hbar}{e^{\hbar/2} - e^{-\hbar/2}})D^U(\psi_r) = \hbar\sum_{p=1}^n B_p^r (B_p - A_p) \left(\prod_{p' \neq p} \dfrac{B_p - A_{p'}}{B_p - B_{p'}}\right), $$
that coincides after replacement  $A_p = e^{a_p}$, $B_p = e^{b_b}$ with $D^Y(\hbar e^{rv}\exp(\gamma v)|_{v^n=h_n})$. Analogously, considering the expansion in a neighborhood of an infinitely, we obtain that $D^Y(\hbar e^{rv}\exp(\gamma v)|_{v^{-n-1}=h_n})$ coincides with $(\dfrac{\hbar}{e^{\hbar/2} - e^{-\hbar/2}})D^U(\varphi_r)$.

Lemma is proved.
\end{proof} 

Easy to see that from lemma  \ref{lm:1} and proposition  \ref{pr:2} follows equality (\ref{eq:5}):
$$[\Phi(E_r), \Phi(F_l)] = \dfrac{\Phi(\psi_{r+s}) - \Phi(\varphi_{r+s})}{e^{\hbar/2} - e^{-\hbar/2}},$$
since, as was proved earlier, by the injectivity of the mapping $D^U$, which follows from the proposition \ref{pr:2},  left-hand side of equality is equal to $e^{kv}\exp(\gamma v)|_{v^n=h_n}$. From the lemma \ref{lm:1} and the injectivity of the map $D^Y$, which also follows from the proposition \ref{pr:2}, it follows that the right-hand side of the equality (\ref{eq:5}) also is equal to $e^{kv}\exp(\gamma v)|_{v^n=h_n}$.

 Thus, the equality (\ref{eq:5}) is proved.

Equality (\ref{eq:5}) easily implies the equivalence of (2) and (3).

\vspace{0.5cm}

\subsection{General case}

We now proceed to the proof of the theorem. We note that the equivalence of the relations (\ref{Y5}) and (\ref{QL5}) in the case when $i = m+1$ is proved above. The case when $i\neq m+1$ is essentially proved in the paper \cite{G-TL}, and we omit its proof.

Namely, let us first show that the relations (\ref{Y2}), (\ref{Y3}) are equivalent to the following relation, written in terms of generating functions of generators and shift operators  $\sigma^{\pm}_j$:
\begin{equation}
[h_i(u), x^{\pm}_{j, s}] = \dfrac{\pm \hbar d_i a_{i,j}}{u - \sigma^{\pm}_j \pm \hbar d_i a_{i,j}}h_i(u)x^{\pm}_{j, s}. \label{7263}
\end{equation}
Here 
$$h_i(u) = 1 + \hbar \sum_{r \geq 0} h_{i, r}\cdot u^{-r-1} \in Y_{\hbar}(\mathfrak{g})[[u^{-1}]]. $$

The right-hand side in (\ref{7263}) is understood as an expansion in powers of $u^{-1}$, and the brackets are understood as a supercommutator. The assertion is proved by direct verification. Let's do this check. For simplicity we write $a = \pm \hbar d_i a_{i,j}/2$. 
We multiply the right-hand side of the relation (\ref{Y3}) by $\hbar u^{-r-1}$ and sum over all $r \geq 0$. We obtain

\begin{equation}
u[h_i(u) - 1 - \hbar u^{-1} h_{i,0}, x^{\pm}_{j, s+1}] - [h_i(u) - 1, x^{\pm}_{j, s+1}] = a\{x^{\pm}_{j,s}, h_i(u) - 1\}, \label{7264}
\end{equation}
where $\{x, h\} = xh + (-1)^{p(h)p(x)}hx$ denotes an anti-supercommutator. Using relations (\ref{Y2}) and $[x, h] = [x,h] + 2(-1)^{p(h)p(x)}hx$ we obtain, that \\
\begin{equation}
(u - \sigma^{\pm}_j + a)[h_i(u), x^{\pm}_{j, s}] = 2a h_i(u) x^{\pm}_{j, s},
\end{equation}
Q.E.D. The converse is also true. It is sufficient to take the coefficients of $u^{-k}$, $k \geq 0$ on the right-hand side of the formula (\ref{7264}) in order to obtain formulas (\ref{Y2}), (\ref{Y3}).

Now, in the same way, we rewrite the relations (\ref{Y4}), (\ref{Y6}) in terms of generating functions of generators. We will use the above notation. For the operator $B \in End(V)$ we denote by $B_{(i)} \in End(V^{\otimes n})$ the operator defined by the formula:
$$B_{(i)} = I^{\otimes i-1} \otimes B \otimes I^{\otimes n-i}.$$  Also, for (super) algebras $A$, we define the map $ad^{(m)}: A^{\otimes m} \rightarrow End(A)$ by formula:
$$ad^{(m)}(a_1 \otimes \ldots \otimes a_m) = ad(a_1)_{(1)}\circ \ldots \circ ad(a_m)_{(m)}.  $$

\begin{predl}
1) The relation (\ref{Y4}) for  $i \neq j$ is equivalent is  to the following relation 
\begin{equation}
A(\sigma^{\pm}_i, \sigma^{\pm}_j)(\sigma^{\pm}_i - \sigma^{\pm}_j \mp a \hbar) x^{\pm}_{i,0}x^{\pm}_{j,0} = A(\sigma^{\pm}_i, \sigma^{\pm}_j)(\sigma^{\pm}_i - \sigma^{\pm}_j \pm a \hbar) x^{\pm}_{i,0}x^{\pm}_{j,0}, \label{7265}
\end{equation}
for an arbitrary  $A(v_1, v_2) \in \mathbb{C}[[v_1, v_2]]$, where $a =  \pm \hbar d_i a_{i,j}/2.$\\
2) The relation (\ref{Y4}) for $i = j$ is equivalent to the fact that for an arbitrary $B(v_1, v_2) \in \mathbb{C}[[v_1, v_2]]$, such that  $B(v_1, v_2) = B(v_2, v_1)$ and we have
\begin{equation}
\mu(B(\sigma^{\pm}_{i,(1)}, \sigma^{\pm}_{j, (2)})(\sigma^{\pm}_{i, (1)} - \sigma^{\pm}_{j, (2)} \mp d_i \hbar) x^{\pm}_{i,0} \otimes x^{\pm}_{j,0} = 0, \label{7266}
\end{equation}
where $\mu: Y_{\hbar}(\mathfrak{g})^{\otimes 2} \rightarrow Y_{\hbar}(\mathfrak{g})$ is multiplication in $Y_{\hbar}(\mathfrak{g})$, and  $\sigma^{\pm}_{i,(j)} := (\sigma^{\pm}_{i})_{(j)}$.\\
3) The relation (\ref{Y6}) for $i \neq j$ is equivalent to the fact that for an arbitrary $A(v_1, v_2, \ldots, v_n) \in \mathbb{C}[v_1, v_2, \ldots, v_n]^{\mathfrak{S}_n}$ the following equality holds
\begin{equation}
ad^{(n)}(A(\sigma^{\pm}_{i,(1)}, \ldots , \sigma^{\pm}_{i,(n)})(x^{\pm}_{i,0})^{\otimes n})x^{\pm}_{j,l} = 0, \label{7267}
\end{equation}
where $n = 1 - d_i a_{i,j}$ for $i \leq m+1 $ and $n =  1 + d_i a_{i,j}$ for $i \geq m +1 $.
\end{predl}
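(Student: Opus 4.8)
The plan is to establish all three equivalences by a single mechanism: each of the relations (\ref{Y4}) and (\ref{Y6}) is generated, as the indices $k,l$ (resp. the index tuple $t_1,\dots,t_r$) vary, by its lowest‑degree instance under the commuting shift homomorphisms $\sigma_i^{\pm}$, and the reformulations (\ref{7265})--(\ref{7267}) are exactly the device that records ``the lowest instance together with all of its $\sigma$‑shifts and $\mathbb{C}[[\hbar]]$‑linear combinations''. Two elementary facts drive everything. First, since $\sigma_i^{\pm}$ fixes every $h_{j,r}$ and sends $x_{j,r}^{\pm}\mapsto x_{j,r+\delta_{ij}}^{\pm}$, one has $x_{i,k}^{\pm}=(\sigma_i^{\pm})^k x_{i,0}^{\pm}$. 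Second, each $\sigma_i^{\pm}$ is an even algebra homomorphism, so it commutes with the formation of products and of (super)commutators, and the operators $\sigma_{i,(1)}^{\pm},\dots,\sigma_{i,(n)}^{\pm}$ on $Y_{\hbar}(\mathfrak{g})^{\otimes n}$ pairwise commute. Hence applying $(\sigma_i^{\pm})^k(\sigma_j^{\pm})^l$ to an identity among $x_{i,0}^{\pm},x_{j,0}^{\pm}$ yields the same identity with indices $(k,l)$, and applying $A(\sigma_i^{\pm},\sigma_j^{\pm})$ for $A=\sum_{k,l}A_{k,l}v_1^kv_2^l$ yields the corresponding $\mathbb{C}[[\hbar]]$‑linear combination; likewise in $n$ variables.

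For part 1, I would rewrite the $k=l=0$ case of (\ref{Y4}) with $i\neq j$, namely $[x_{i,1}^{\pm},x_{j,0}^{\pm}]-[x_{i,0}^{\pm},x_{j,1}^{\pm}]=a\{x_{i,0}^{\pm},x_{j,0}^{\pm}\}$ (with $a=\pm\hbar d_ia_{ij}/2$ and $\{\,,\,\}$ the anti‑supercommutator), as $(\sigma_i^{\pm}-\sigma_j^{\pm})[x_{i,0}^{\pm},x_{j,0}^{\pm}]=a\{x_{i,0}^{\pm},x_{j,0}^{\pm}\}$ by the first fact, and then regroup the supercommutator and anti‑supercommutator into the two orderings $x_{i,0}^{\pm}x_{j,0}^{\pm}$ and $(-1)^{p_ip_j}x_{j,0}^{\pm}x_{i,0}^{\pm}$ (writing $p_i:=p(x_{i,0}^{\pm})$). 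This turns it into the constant‑in‑$A$ case of (\ref{7265}) in the intended form $(\sigma_i^{\pm}-\sigma_j^{\pm}-a)\,x_{i,0}^{\pm}x_{j,0}^{\pm}=(-1)^{p_ip_j}(\sigma_i^{\pm}-\sigma_j^{\pm}+a)\,x_{j,0}^{\pm}x_{i,0}^{\pm}$. By the second fact its $A(\sigma_i^{\pm},\sigma_j^{\pm})$‑image with $A=v_1^kv_2^l$ is exactly (\ref{Y4}) at $(k,l)$, so (\ref{Y4})$\Rightarrow$(\ref{7265}) for every monomial $A$, hence for every $A\in\mathbb{C}[[v_1,v_2]]$ by linearity and $v$‑adic continuity; conversely, specializing $A$ to monomials recovers all of (\ref{Y4}).

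Parts 2 and 3 run identically, with one extra observation in each: the intrinsic symmetry of the relation under permuting its repeated index forces the use of a symmetric $A$ and loses nothing. For part 2 ($i=j$), the $k=l=0$ instance, after the same regrouping, reads $\mu\big((\sigma_{i,(1)}^{\pm}-\sigma_{i,(2)}^{\pm}\mp d_i\hbar)\,x_{i,0}^{\pm}\otimes x_{i,0}^{\pm}\big)=0$; since $(\sigma_{i,(1)}^{\pm}-\sigma_{i,(2)}^{\pm}\mp d_i\hbar)\,x_{i,0}^{\pm}\otimes x_{i,0}^{\pm}$ is anti‑invariant under the flip of the two tensor factors while $\mu$ intertwines that flip with the (super)symmetry of multiplication, only the $v_1\leftrightarrow v_2$‑symmetric part of $B$ survives in $\mu\big(B(\sigma_{i,(1)}^{\pm},\sigma_{i,(2)}^{\pm})(\cdots)\big)$, and imposing (\ref{7266}) for all symmetric $B$ is equivalent, via the orbit sums $v_1^kv_2^l+v_1^lv_2^k$, to (\ref{Y4}) at all $(k,l)$. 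For part 3, using the two facts and the definition of $ad^{(n)}$ one expands $ad^{(n)}\big(A(\sigma_{i,(1)}^{\pm},\dots,\sigma_{i,(n)}^{\pm})(x_{i,0}^{\pm})^{\otimes n}\big)x_{j,l}^{\pm}=\sum_{k_1,\dots,k_n}A_{k_1,\dots,k_n}[x_{i,k_1}^{\pm},[\dots,[x_{i,k_n}^{\pm},x_{j,l}^{\pm}]\dots]]$; taking $A$ to be the orbit sum $\sum_{\pi\in\mathfrak{S}_n}v_{\pi(1)}^{t_1}\cdots v_{\pi(n)}^{t_n}$ reproduces precisely the $\mathfrak{S}_r$‑symmetrized left side of (\ref{Y6}) with $n=r$, $l=s$, and since these orbit sums form a $\mathbb{C}[[\hbar]]$‑basis of $\mathbb{C}[v_1,\dots,v_n]^{\mathfrak{S}_n}$, (\ref{7267}) for all symmetric $A$ is equivalent to (\ref{Y6}) for all index tuples.

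The main obstacle is bookkeeping rather than conceptual: keeping the super‑signs $(-1)^{p_ip_j}$ straight when one of $i,j$ equals $m+1$, so that ``supercommutator'' and ``anti‑supercommutator'' trade places; checking that the generators governed by the separate relations (\ref{Y2'}) and (\ref{Y4'}) are correctly excluded from the ranges of $i,j$ here; and confirming that the integer $n$ of part 3 really equals the length $r=1-\tilde a_{ij}$, i.e.\ that $n=1-d_ia_{ij}$ for $i\le m+1$ and $n=1+d_ia_{ij}$ for $i\ge m+1$ agree with $r$ under the paper's normalization of the symmetrized Cartan matrix with $d_i=\pm1$. None of this disturbs the structure of the argument, which in every case is: reduce to the lowest instance, transport it by the commuting shift operators, and repackage linear combinations of the shifts as an arbitrary function of the shift operators, restricted to the symmetric subspace exactly when the relation itself is symmetric under permuting its repeated index.
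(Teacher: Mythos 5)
Your proposal is correct and follows essentially the same route as the paper: rewrite the lowest instance of (\ref{Y4}) (resp. (\ref{Y6})) in terms of the commuting shift operators $\sigma_i^{\pm}$, transport it by monomials $\sigma_i^{\pm r}\sigma_j^{\pm s}$ to recover all index pairs, and pass to arbitrary (symmetric, for the repeated-index case) series $A$, $B$ by linearity, exactly as in the paper's equations (\ref{7268}) and the symmetrized identity for item 2. Your reading of (\ref{7265}) with the reversed product $x^{\pm}_{j,0}x^{\pm}_{i,0}$ and the super-sign on the right-hand side is the intended one, and your treatment of part 3 via orbit sums merely makes explicit what the paper delegates to \cite{G-TL}.
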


\begin{proof}
1) We note that relation (\ref{Y4})
$$ [x^{\pm}_{i,{r+1}}, x^{\pm}_{j,s}] - [x^{\pm}_{i,r}, x^{\pm}_{j,{s+1}}]  = \pm \frac{d_ia_{ij}\hbar}{2}(x^{\pm}_{i,r}x^{\pm}_{j,s} + x^{\pm}_{j,s}x^{\pm}_{i,r}) $$
can be rewritten in the following form:
\begin{equation}
\sigma^{\pm r}_i \sigma^{\pm s}_j (\sigma^{\pm}_i - \sigma^{\pm}_j \mp a \hbar) x^{\pm}_{i,0}x^{\pm}_{j,0} = \sigma^{\pm r}_i \sigma^{\pm s}_j(\sigma^{\pm}_i - \sigma^{\pm}_j \pm a \hbar) x^{\pm}_{i,0}x^{\pm}_{j,0}, \label{7268}
\end{equation}
which proves item 1. \\

2) If  $i = j$, $a = d_ia_{ii}$, then we note that it follows from formula (\ref{7268}) that
$$\mu(\sigma^{\pm r}_{i,(1)} \sigma^{\pm s}_{j, (2)})(\sigma^{\pm}_{i, (1)} - \sigma^{\pm}_{j, (2)} \mp d_i \hbar) x^{\pm}_{i,0} \otimes x^{\pm}_{j,0} = \mu(\sigma^{\pm s}_{i,(1)} \sigma^{\pm r}_{j, (2)})(\sigma^{\pm}_{i, (2)} - \sigma^{\pm}_{j, (1)} \mp d_i \hbar) x^{\pm}_{i,0} \otimes x^{\pm}_{j,0}.$$
We note that this relation is equivalent to the relation
$$\mu(\sigma^{\pm r}_{i,(1)} \sigma^{\pm s}_{j, (2)} + \sigma^{\pm s}_{i, (1)} \sigma^{\pm s}_{j, (2)})(\sigma^{\pm}_{i,(1)} - \sigma^{\pm}_{j, (2)} \mp d_i \hbar) x^{\pm}_{i,0} \otimes x^{\pm}_{j,0} =  0,$$
which proves item 2.\\

3) Note that (\ref{7267}) is just a rewritten identity (\ref{Y4}), and this fact proves  3). We note that the proof of this relation in our case coincides with the proof for the Yangians of simple Lie algebras and quantum loop algebras given in detail in the paper \cite{G-TL}.

Proposition is proved.
\end{proof}

Consider in $Y^0$, the system of generators $\{t_{i,r}\}_{i \in I, r \in \mathbb{Z}_+}$ introduced earlier. This system of generators is, as is easy to see, homogeneous and  $deg(t_{i,r}) = r$. Moreover, $t_{i,0} = h_{i,0}$ and $t_{i,r} \equiv h_{i, r} mod  \hbar$ for an arbitrary  $r \geq 1$ as 
$$t_i(u) \equiv  (\hbar \sum_{r \geq 0} h_{i, r} u^{-r-1}) mod \hbar^2.$$
We consider, as above, $B(t_i(u))(v)$  the inverse Borel transformation  $B$ (see (\ref{def:Bor})) of formal power series  $t_i(u)$:

\begin{equation} \nonumber
B(t_i(u))(v) = \hbar \sum_{r \geq 0} t_{i, r} \frac{v^r}{r!} \in Y^0[[v]]. \label{7270}
\end{equation}

\begin{lm}
For arbitrary $i, j \in I$ we have the following equality
\begin{equation}
[B(t_i(u))(v), x^{\pm}_{j,s}] = \pm \frac{q_i^{a_{i,j}v} - q_i^{-a_{i,j}v}}{v}e^{\sigma^{\pm}_j v}x^{\pm}_{j,s}. \label{7271}
\end{equation}
\end{lm}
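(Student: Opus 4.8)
The plan is to translate the relations (\ref{Y2}), (\ref{Y2'}) and (\ref{Y3}) for the Yangian into a single statement about the generating function $h_i(u)$, and then to apply the inverse Borel transform $B$ termwise. The starting point is the generating-function identity (\ref{7263}), which was established just above by direct verification: for $i$ or $j \neq m+1$,
$$[h_i(u), x^{\pm}_{j,s}] = \frac{\pm \hbar d_i a_{i,j}}{u - \sigma^{\pm}_j \pm \hbar d_i a_{i,j}} h_i(u) x^{\pm}_{j,s},$$
while for $i = j = m+1$ relations (\ref{Y2'}), (\ref{Y4'}) force $[h_{m+1}(u), x^{\pm}_{m+1,s}] = 0$. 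The first step is therefore to pass from $h_i(u)$ to its logarithm $\hbar\, t_i(u) = \log h_i(u)$ using (\ref{7.1.1}): since $\sigma^{\pm}_j$ is an algebra homomorphism of $Y_\hbar(\mathfrak{b}_\pm)$ fixing the Cartan generators, the commutator $[\,\cdot\,, x^{\pm}_{j,s}]$ acts on $h_i(u)$ as a ``derivation twisted by $\sigma^\pm_j$'', and the identity for $h_i(u)$ rephrases as
$$[t_i(u), x^{\pm}_{j,s}] = \pm\left(\frac{1}{u - \sigma^{\pm}_j} - \frac{1}{u - \sigma^{\pm}_j \pm \hbar d_i a_{i,j}}\right) x^{\pm}_{j,s},$$
the telescoping coming precisely from differentiating $\log$ of the rational prefactor; concretely, one checks that $\log\!\big(1 + \tfrac{\pm\hbar d_i a_{i,j}}{u - \sigma^\pm_j}\big)$ has $u$-derivative equal to that difference of simple fractions. (In the case $i = j = m+1$ both relations give zero, consistently with $a_{m+1,m+1} = 0$ in the symmetrized Cartan matrix, so the formula below also specializes correctly.)

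The second step is to apply $B$ to this identity in $u$. Using the computation already recorded in the $\mathfrak{sl}(1,1)$ subsection — namely $B(\log(1 - pu^{-1})) = (1 - e^{pv})/v$, equation (\ref{eq:Bor_cond}) — with $p$ replaced by the two shifted values $\sigma^\pm_j$ and $\sigma^\pm_j \mp \hbar d_i a_{i,j}$ (which one must first justify, since $p$ is now an operator rather than a scalar; this is legitimate because $\sigma^\pm_j$ commutes with the Cartan part on which $t_i(u)$ lives, so the scalar identity applies coefficient-by-coefficient after expanding in $u^{-1}$), one obtains
$$[B(t_i(u))(v), x^\pm_{j,s}] = \pm\,\frac{e^{(\sigma^\pm_j \mp \hbar d_i a_{i,j})v} - e^{\sigma^\pm_j v}}{v}\, x^\pm_{j,s} = \pm\,\frac{e^{-(\pm\hbar d_i a_{i,j})v} - 1}{v}\, e^{\sigma^\pm_j v} x^\pm_{j,s}.$$
Finally, rewriting $\pm(e^{-(\pm\hbar d_i a_{i,j})v} - 1) = \mp(e^{(\pm\hbar d_i a_{i,j})v} - 1)$ and then symmetrizing — multiplying and dividing, or rather absorbing an overall factor $e^{\mp \hbar d_i a_{i,j} v/2}$ which commutes past $e^{\sigma^\pm_j v}$ on the relevant generator — gives the claimed form
$$[B(t_i(u))(v), x^\pm_{j,s}] = \pm\,\frac{q_i^{a_{i,j} v} - q_i^{-a_{i,j} v}}{v}\, e^{\sigma^\pm_j v} x^\pm_{j,s},$$
upon recalling $q = e^{\hbar/2}$, $q_i = q^{d_i}$, so that $q_i^{a_{i,j} v} = e^{\hbar d_i a_{i,j} v/2}$; the bookkeeping of which half-power of $q_i$ lands where is exactly matched to the half-shift built into the definition of $\gamma_i$ and the $\psi_i, \varphi_i$ generating series.

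\textbf{Main obstacle.} The routine part is the Borel-transform bookkeeping; the genuinely delicate point is legitimizing the substitution of an operator for the scalar $p$ in (\ref{eq:Bor_cond}), i.e. making sense of $e^{\sigma^\pm_j v}$ and of the ``rational function of $\sigma^\pm_j$'' acting on $x^\pm_{j,s}$, and checking convergence of all these power series in $v$ in the $\hbar$-adically (and $N$-gradedly) completed Yangian $\widehat{Y_\hbar(\mathfrak{g})}$. One must verify that $\sigma^\pm_j$ raises the $N$-degree by one on the generator it shifts, so that $e^{\sigma^\pm_j v}$ and $1/(u - \sigma^\pm_j)$ are well-defined as formal operators acting on a fixed generator, and that the equality of the two sides is an equality of elements of $\widehat{Y_\hbar(\mathfrak{g})}[[v]]$ rather than merely a formal manipulation. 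I would handle this by expanding everything in $u^{-1}$ first, reducing each coefficient identity to a finite statement already covered by (\ref{Y2}), (\ref{Y2'}), (\ref{Y3}), and only then resumming in $v$. Once that is in place the lemma follows immediately from the $\mathfrak{sl}(1,1)$-case computations combined with (\ref{7263}) and (\ref{7.1.1}).
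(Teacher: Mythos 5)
Your overall route is the one the paper itself takes: start from the commutation identity (\ref{7263}), pass to the logarithmic series $t_i(u)$ of (\ref{7.1.1}), and apply the inverse Borel transform through the scalar identity (\ref{eq:Bor_cond}) with $p$ replaced by shifted copies of $\sigma^{\pm}_j$, the substitution being justified coefficientwise in $u^{-1}$ exactly as you indicate. But two of your steps are wrong as written. First, the displayed formula
$[t_i(u), x^{\pm}_{j,s}] = \pm\left(\frac{1}{u-\sigma^{\pm}_j} - \frac{1}{u-\sigma^{\pm}_j \pm \hbar d_i a_{i,j}}\right)x^{\pm}_{j,s}$
is the $u$-derivative of the logarithm of the conjugation prefactor, not the commutator itself; there is no differentiation in this step. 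The correct statement (and the one your next step silently uses, since you then feed logarithms into (\ref{eq:Bor_cond})) is
$[t_i(u), x^{\pm}_{j,s}] = \log\left(\frac{u-\sigma^{\pm}_j+a}{u-\sigma^{\pm}_j-a}\right)x^{\pm}_{j,s}$ with $a=\pm\hbar d_i a_{i,j}/2$,
coming from $h_i(u)\,x^{\pm}_{j,s}\,h_i(u)^{-1} = \frac{u-\sigma^{\pm}_j+a}{u-\sigma^{\pm}_j-a}\,x^{\pm}_{j,s}$; note that the coefficient $\hbar/2$ in (\ref{Y3}) produces these \emph{symmetric half-shifts} (the denominator of (\ref{7263}) as printed carries a factor-of-two slip relative to the paper's own derivation, and you have inherited it).

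Second, and this is the step that fails, your endgame does not close. From the asymmetric pair of shifts $\sigma^{\pm}_j$ and $\sigma^{\pm}_j \mp \hbar d_i a_{i,j}$ you arrive at $\pm\frac{e^{(\sigma^{\pm}_j \mp \hbar d_i a_{i,j})v} - e^{\sigma^{\pm}_j v}}{v}\,x^{\pm}_{j,s}$ and then invoke $\pm\left(e^{-(\pm\hbar d_i a_{i,j})v}-1\right) = \mp\left(e^{(\pm\hbar d_i a_{i,j})v}-1\right)$, which is false: the two sides differ by the factor $e^{\mp\hbar d_i a_{i,j}v}$. What your expression actually equals is $-\frac{q_i^{a_{i,j}v}-q_i^{-a_{i,j}v}}{v}\,e^{(\sigma^{\pm}_j \mp \hbar d_i a_{i,j}/2)v}\,x^{\pm}_{j,s}$, i.e. the claimed right-hand side multiplied by the nontrivial factor $\mp q_i^{\mp a_{i,j}v}$, and such a factor cannot be ``absorbed'': it genuinely shifts the argument of $e^{\sigma^{\pm}_j v}$ and changes the element of $\widehat{Y_{\hbar}(\mathfrak{g})}[[v]]$. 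The cure is simply to start from the symmetric half-shift form above, as the paper does: with $a=\pm\hbar d_i a_{i,j}/2$ the Borel transform gives directly $\left(\frac{1-e^{(\sigma^{\pm}_j-a)v}}{v} - \frac{1-e^{(\sigma^{\pm}_j+a)v}}{v}\right)x^{\pm}_{j,s} = \frac{e^{av}-e^{-av}}{v}\,e^{\sigma^{\pm}_j v}x^{\pm}_{j,s} = \pm\frac{q_i^{a_{i,j}v}-q_i^{-a_{i,j}v}}{v}\,e^{\sigma^{\pm}_j v}x^{\pm}_{j,s}$, with no leftover factor and the correct sign. Your remarks on the $i=j=m+1$ case and on legitimizing the operator substitution by expanding in $u^{-1}$ first are fine and consistent with the paper.
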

\begin{proof}
For simplicity, let $a = \pm \hbar d_i a_{i,j}/2$, $e^a = q_i^{\pm a_{i,j}}$. Then, from (\ref{7263}), we have
$$h_i(u) x^{\pm}_{j,s} h_i(u)^{-1} = \dfrac{u - \sigma^{\pm}_j + a}{u - \sigma^{\pm}_j - a}x^{\pm}_{j,s}.$$
Hence we obtain that
$$[t_i(u), x^{\pm}_{j,s}] = \log\left(\dfrac{u - \sigma^{\pm}_j + a}{u - \sigma^{\pm}_j - a}\right)x^{\pm}_{j,s}. $$
Using the above mentioned  (\ref{eq:Bor_cond})  equality
$$B(\log(1 - p u^{-1})) = \dfrac{1 - e^{pv}}{v} $$
we obtain 
$$[B(t_i(u))(v), x^{\pm}_{j,s}] = B(\log(1 - (\sigma^{\pm}_j - a) u^{-1}) - \log(1 - (\sigma^{\pm}_j + a) u^{-1}))x^{\pm}_{j,s} = $$
$$\left(\dfrac{1 - e^{(T^{\pm}_j - a)v}}{v} - \dfrac{1 - e^{(T^{\pm}_j + a)v}}{v}\right)x^{\pm}_{j,s} =  \dfrac{e^{av} - e^{-av}}{v}e^{\sigma^{\pm}_jv}x^{\pm}_{j,s}.$$
Lemma is proved.
\end{proof} 
 
To prove the theorem it remains to prove the equivalence of the relation (\ref{Y6}) to the relation (\ref{QL6}) and relations (\ref{Y7}) to the relations (\ref{QL7}), (\ref{QL71}).

\begin{predl}
1) The relation (\ref{Y6}) is equivalent to the relation (\ref{QL6}). \\
2)The relations (\ref{Y7}) are equivalent to the relations (\ref{QL7}), (\ref{QL71}).
\end{predl}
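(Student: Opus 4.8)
The plan is to reduce both items to the Serre-type relations already written in generating-function form in the preceding proposition and lemma, and then to match them on the two sides of $\Phi$ using the logarithmic generators $t_{i,r}$ and the shift operators $\sigma^{\pm}_j$. For item 1, I would first observe that, by part 3 of the proposition above, the Yangian relation (\ref{Y6}) is equivalent, for $i\neq j$ and $n=1-\tilde a_{ij}$, to the vanishing of $ad^{(n)}\bigl(A(\sigma^{\pm}_{i,(1)},\ldots,\sigma^{\pm}_{i,(n)})(x^{\pm}_{i,0})^{\otimes n}\bigr)x^{\pm}_{j,l}$ for every symmetric polynomial $A$. On the quantum loop side, the defining relation (\ref{QL6}) is the $q$-Serre relation with the Gaussian binomial coefficients $\left[n\atop s\right]_{q_i}$; the standard manipulation (already carried out in \cite{G-TL} for the non-super case) rewrites the left-hand side of (\ref{QL6}) as $ad^{(n)}$ of a suitable symmetric $q$-analogue applied to $E_{j,l}$ (resp.\ $F_{j,l}$). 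Applying $\Phi$ from (\ref{72621})--(\ref{72622}) and using the lemma on $[B(t_i(u))(v),x^{\pm}_{j,s}]$ together with the fact that $\Phi$ intertwines the grading shifts $\sigma^{\pm}_i$ with multiplication by $E_{i,\bullet},F_{i,\bullet}$, one sees that $\Phi$ sends the $q$-symmetric polynomial appearing in (\ref{QL6}) to a symmetric polynomial in the $\sigma^{\pm}_i$ of exactly the type appearing in (\ref{7267}); the key point is that the factors $g^{\pm}_{i,m}$ and the exponentials $e^{r\sigma^{\pm}_i}$, lying in (the completion of) $Y_\hbar(\mathfrak h)$, commute past the $ad$-action up to invertible scalars, so the two Serre ideals correspond.

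For item 2, I would treat (\ref{Y7}) exactly as the special higher Serre relation it is: the bracket $[[x^{\pm}_{m,k},x^{\pm}_{m+1,0}],[x^{\pm}_{m+1,0},x^{\pm}_{m+2,t}]]=0$ only involves the odd root vector $x^{\pm}_{m+1,0}$ at spectral parameter $0$ and single copies of the neighbouring even root vectors, so as in part 1 one rewrites it via the $\sigma^{\pm}$-operators as the vanishing of $\mu\bigl(P(\sigma^{\pm}_{m,(1)},\sigma^{\pm}_{m+2,(2)})\,[x^{\pm}_{m,0},x^{\pm}_{m+1,0}]\otimes[x^{\pm}_{m+1,0},x^{\pm}_{m+2,0}]\bigr)=0$ for all polynomials $P$, and similarly (\ref{QL7})--(\ref{QL71}) become the vanishing of the analogous expression built from the $q$-brackets $[\,\cdot\,,\cdot\,]_q$. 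Then I apply $\Phi$: since $\Phi$ is already known (from the $\mathfrak{sl}(1,1)$ analysis and from the $i\neq m+1$ cases settled in \cite{G-TL}) to be compatible with the lower relations, it carries $[E_{m,k},E_{m+1,0}]_q$ to (a $Y_\hbar(\mathfrak h)$-multiple of) $[x^{+}_{m,\bullet},x^{+}_{m+1,0}]$ and likewise for the $m+2$ factor, so the two quartic relations match up. Here one uses that the parity of $[x^{\pm}_{m,k},x^{\pm}_{m+1,0}]$ is odd, which is precisely why the outer bracket in (\ref{Y7}) is an ordinary commutator of two odd elements and matches the $q$-commutator $[\,\cdot\,,\cdot\,]_q$ in (\ref{QL7}) with the correct sign.

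The main obstacle I expect is bookkeeping the signs and the $q$-deformations of the Gaussian coefficients: one must verify that the symmetric $q$-polynomial in the $E_{i,k_{\pi(\bullet)}}$ occurring in (\ref{QL6}) is sent by $\Phi$ to a polynomial lying in $\mathbb{C}[v_1,\ldots,v_n]^{\mathfrak S_n}$ (evaluated at the $\sigma^{\pm}_i$) whose $ad$-action is governed by (\ref{7267}), and that no extra terms survive because of the super signs $(-1)^{p(a)p(b)}$ hidden in the $q$-commutators and in the odd generator $x^{\pm}_{m+1}$. Concretely, the delicate step is to check that passing the Cartan-valued prefactors $e^{r\sigma^{\pm}_i}\sum g^{\pm}_{i,m}$ through the $ad$-operators produces only an overall invertible factor in the completed Cartan subalgebra and does not mix the $\pm$ sectors; granting the lemma $[B(t_i(u))(v),x^{\pm}_{j,s}]=\pm\frac{q_i^{a_{ij}v}-q_i^{-a_{ij}v}}{v}e^{\sigma^{\pm}_jv}x^{\pm}_{j,s}$, this is a finite computation, and once it is done the equivalences 1) and 2) follow, completing the verification that $\Phi$ is a well-defined superalgebra homomorphism, hence (combined with the earlier injectivity and surjectivity-on-completions arguments) the proof of Theorem \ref{th:main}.
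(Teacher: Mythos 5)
Your proposal is correct in outline and follows essentially the same route as the paper: for item 2 your decisive step --- that $\Phi$ sends the $q$-brackets $[E_{m,k},E_{m+1,0}]_q$ and $[E_{m+1,0},E_{m+2,s}]_q$ to invertible Cartan-valued multiples of the corresponding Yangian brackets, so that the quartic relations (\ref{Y7}) and (\ref{QL7}), (\ref{QL71}) correspond (and similarly in the $F$-sector) --- is exactly the computation the paper carries out, while for item 1 both you and the paper reduce (\ref{Y6}) and (\ref{QL6}) to the generating-function/$ad$-operator form and leave the remaining verification to the Gautam--Toledano-Laredo machinery (the paper additionally invokes the PBW theorem for the Yangian). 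The only slips are cosmetic and not load-bearing: your intermediate reformulation of (\ref{Y7}) as the vanishing of $\mu(P(\sigma^{\pm}_{m,(1)},\sigma^{\pm}_{m+2,(2)})\,[x^{\pm}_{m,0},x^{\pm}_{m+1,0}]\otimes[x^{\pm}_{m+1,0},x^{\pm}_{m+2,0}])$ encodes products rather than the superbracket of the two odd brackets (which is an anticommutator, not an ordinary commutator), but your final matching is obtained by applying $\Phi$ directly to the quartic expressions, just as in the paper, so the equivalence goes through unchanged.
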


\begin{proof}
The proof  of item 1 is somewhat complicated, based on the formulas proved above and the Poincare-Birkhoff-Witt theorem for the Yangian (see, for example, \cite{St}, and also \cite{St12} for the more complicated case of the Yangian of the strange Lie superalgebra).

We briefly describe the scheme of the proof of item 2 of the proposition. \\ 
As shown above,

$$\Phi(E_{m, r})\Phi(E_{m+1, 0}) - q \Phi(E_{m+1, 0})\Phi(E_{m, r})$$ 
$$= e^{r\sigma^+_{m}}g_m(\sigma^+_m)\lambda^+_m(\sigma^+_m)(g_{m+1}(\sigma^+_{m+1})) x^+_{m,0}x^+_{m+1} $$
$$- q e^{r\sigma^+_{m}}g_{m+1}(\sigma^+_{m+1})\lambda^+_{m+1}(\sigma^+_{m+1})(g_{m}(\sigma^+_{m})) x^+_{m+1,0}x^+_{m}   $$
$$= e^{r\sigma^+_{m}}g_m(\sigma^+_m)\lambda^+_m(\sigma^+_m)(g_{m+1}(\sigma^+_{m+1}))(x^+_{m,0}x^+_{m+1})$$ 
$$- q \dfrac{e^{\sigma^+_{m+1}} - e^{\sigma^+_{m} + a\hbar}}{e^{\sigma^+_{m}} - e^{\sigma^+_{m+1} + a\hbar}}) \dfrac{\sigma^+_{m} - \sigma^+_{m+1} - a\hbar}{\sigma^+_{m+1} - \sigma^+_{m} + a\hbar}(x^+_{m+1,0}x^+_{m}) $$
$$= e^{r\sigma^+_{m}}g_m(\sigma^+_m)\lambda^+_m(\sigma^+_m)(g_{m+1}(\sigma^+_{m+1}))([x^+_{m,0}, x^+_{m+1,0}]).  $$
Similarly, it is checked that 
$$\Phi(E_{m+1, 0})\Phi(E_{m+2, s}) - q \Phi(E_{m+2, s})\Phi(E_{m+1, 0})$$ 
$$=  e^{s\sigma^+_{m+2}}g_{m+1}(\sigma^+_{m+1})\lambda^+_{m+1}(\sigma^+_{m+1})(g_{m+2}(\sigma^+_{m+2}))([x^+_{m+1,0}, x^+_{m+2,0}]). $$
Calculating the commutator of the obtained expressions, we see that we obtain

\begin{equation}\label{eq:4term}
A(g_{m}(\sigma^+_{m}), g_{m+1}(\sigma^+_{m+1}), g_{m+2}(\sigma^+_{m+2})) ([[x^+_{m,r}, x^+_{m+1,0}], [x^+_{m+1,0}, x^+_{m+2,s}]]), 
\end{equation}
and expression   $A(g_{m}(\sigma^+_{m}), g_{m+1}(\sigma^+_{m+1}), g_{m+2}(\sigma^+_{m+2}))$  is invertible. 

Thus, the expression (\ref{eq:4term}) vanishes if and only if the relation (\ref{Y7}) holds, if the plus sign is chosen in it. But the inverse of the expression (\ref{QL7}) as follows from the above calculations is equivalent to the relation (\ref{Y7}).

Checking the equivalence of the relation (\ref{QL71}) and the relation (\ref{Y7}) in the case when the minus sign is selected in it is carried out similarly. 

The proposition is proved. 

\end{proof}

The main result, theorem \ref{th:main}, is deduced from the assertions proved above. 

\begin{proof}

We briefly describe the plan of the proof of the theorem \ref{th:main}.  It is easy to see that the assertion of the existence of the monomorphism $\Phi$ defined by the formula (\ref{eq:mr}) follows from the propositions proved above. The assertion that this mapping can be uniquely extended to an isomorphism (\ref{eq:mr1}) is proved by standard arguments analogous to those given in \cite{G-TL}.

Namely, we first prove the isomorphism of the completions of the loop and current superalgebras, that is, the correctness of the assertion to be proved in the case when the deformation parameter $\hbar = 0$. Then, using the PBW theorem and the fact that the Yangian and the quantum loop superalgebra are flat deformations corresponding to the current and loop superbialgebras, we deduce from this the fact that the map $\widehat{\Phi}$ is an isomorphism. 

\end{proof}

\vspace{0.5cm}

The work is supported by grant "Domestic Research in Moscow"  of Interdisciplinary Scientific Center J.-V. Poncelet (CNRS UMI 2615) and Scoltech Center for Advanced Study.

\vspace{1.0cm}

Interdisciplinary Scientific Center J.-V. Poncelet (CNRS UMI 2615) and Scoltech Center for Advanced Study,   Don State Technical University, South Mathematical Institute. \\

E-mail address: stukopin@gmail.com

\end{document}